\numberwithin{equation}{section}
 \theoremstyle{definition}
 \newtheorem{example}{Example}[section]
 \theoremstyle{plain}
 \newtheorem{lemma}[example]{Lemma}
 \newtheorem{proposition}[example]{Proposition}
 \newtheorem{theorem}[example]{Theorem}
 \theoremstyle{remark}
 \newtheorem{remark}[example]{Remark}
\begin{document}
 \title{On the higher derivatives of $Z(t)$ associated with the Riemann Zeta-Function }
 \author{Kaneaki Matsuoka}
 \date{}
 \maketitle
 \begin{section}
 {INTRODUCTION}
 \end{section}
 Let $s=\sigma+it$ be a complex variable and $\zeta(s)$ the Riemann zeta-function. The functional equation for the Riemann zeta-function is 
$$h(s)\zeta(s)=h(1-s)\zeta(1-s),$$
where $h(s)=\pi^{-s/2}\Gamma(s/2)$. Define 
$$Z(t)=e^{i\theta(t)}\zeta(1/2+it),$$
where $\theta(t)=\arg h(1/2+it)$. From the functional equation it follows that $Z(t)$ is real and we can easily see that zeros of $Z(t)$ coincide with those of $\zeta(1/2+it)$. These properties make it possible to investigate the zeros of the Riemann zeta-function on the critical line. Using a function similar to $Z(t)$ Hardy \cite{Hardy} first proved that there are infinitely many zeros on the critical line and Hardy and Littlewood \cite{HL} showed that the number of zeros on the line segment from $1/2$ to $1/2+iT$ is $\gg T$. Siegel \cite{Siegel} showed that the number of those is  $>3e^{-3/2}T/8\pi+o(T)$. He defined $Z(t)$ and derived the Riemann-Siegel formula from the manuscript of Riemann, which was the essential part of his proof. Later A. Selberg \cite{Selberg} improved the bounds to $\gg T\log T$ and recently H. Bui, B. Conrey and M. Young \cite{Conrey} showed that more than 41 $\%$ of the zeros of the Riemann zeta-function are on the critical line. The Riemann-Siegel formula plays an important role in the investigation of the behavior of the Riemann zeta-function on the critical line as well as the calculation of the number of the complex zeros of the Riemann zeta-function. We sometimes call the function $Z(t)$ the Hardy function or the Riemann-Siegel function because of the above reason. \\
\indent It is well known that under the assumption of the Riemann hypothesis $Z'(t)$ has exactly one zero between consecutive zeros of $Z(t)$ (see Edwards [3, p.176]). R. J. Anderson \cite{Anderson} showed the same relationship between zeros of $Z'(t)$ and those of $Z''(t)$. K. Matsumoto and Y. Tanigawa \cite{Matsumoto} studied the number of zeros of the higher derivatives of $Z(t)$. They showed that under the assumption of the Riemann hypothesis the number of zeros of $Z^{(n)}(t)$ in the interval $(0,T)$ is $T/2\pi\log T/2\pi-T/2\pi+O(\log T)$, where $n$ is any positive integer and the implied constant depends on $n$. From this result we find that the same type of relationship as above is valid between $Z^{(n)}(t)$ and $Z^{(n+1)}(t)$ in almost cases except for $O(\log t)$ terms. In this paper we will prove the following theorem.\\
\begin{theorem}
If the Riemann hypothesis is true then for any positive integer $n$ there exists a $t_{n}>0$ such that for $t>t_{n}$ the function $Z^{(n+1)}(t)$ has exactly one zero between consecutive zeros of $Z^{(n)}(t)$.
\end{theorem}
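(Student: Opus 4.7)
Proof proposal.

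The plan is to reduce the claim to an automatic interlacing statement for the derivatives of the completed zeta function, and then transfer this back to $Z^{(n)}$. Set $f(t):=\xi(1/2+it)$, where $\xi(s)=\tfrac12 s(s-1)\pi^{-s/2}\Gamma(s/2)\zeta(s)$. Under the Riemann hypothesis, $\xi$ is a real entire function of order $1$ whose zeros lie on the critical line, so $f$ is a real entire, even, order-$1$ function of $t$ with only real zeros; hence $f$ belongs to the Laguerre--Pólya class. Since this class is closed under differentiation, every $f^{(n)}$ is also in it, and applying the Hadamard factorization to $f^{(n)}$ shows that $f^{(n+1)}/f^{(n)}$ is a sum of simple-pole contributions $1/(t-a_m)$ (plus a linear correction) which is strictly decreasing from $+\infty$ to $-\infty$ on each interval between consecutive zeros $a_m,a_{m+1}$ of $f^{(n)}$. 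Consequently $f^{(n+1)}$ has exactly one zero in each such gap.

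Next, compare $Z^{(n)}$ to $f^{(n)}$. Write $f(t)=A(t)Z(t)$ with $A(t)=-\tfrac12(1/4+t^2)|h(1/2+it)|$, a smooth nonvanishing real function. Stirling's formula gives $\log A(t)=-\pi t/4+O(\log t)$, so $(\log A)'(t)\to -\pi/4$ and $(\log A)^{(k)}(t)=O(t^{1-k})$ for $k\ge2$; in particular $A^{(k)}(t)/A(t)=O(1)$ as $t\to\infty$. Leibniz's rule then yields
\[
Z^{(n)}(t)=\frac{f^{(n)}(t)}{A(t)}-\sum_{k=1}^{n}\binom{n}{k}\frac{A^{(k)}(t)}{A(t)}\,Z^{(n-k)}(t).
\]
Combining the Riemann--Siegel-type estimates $|Z^{(j)}(t)|\ll(\log t)^{j}$ (in a suitable averaged sense, as already used in the zero-counting of Matsumoto--Tanigawa) with the fact that $f^{(n)}/A$ vanishes linearly at its zeros with slope of order $(\log t)^{n+1}$, one sees that the correction term has typical size $O((\log t)^{n-1})$ whereas the slope of the main term is of order $(\log t)^{n+1}$. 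A Taylor expansion near a zero of $f^{(n)}$ therefore shows that zeros of $Z^{(n)}$ are perturbations of zeros of $f^{(n)}$ by $O(1/(\log t)^{2})$, and analogously for $Z^{(n+1)}$ versus $f^{(n+1)}$. Since the average spacing of zeros of $f^{(n)}$ is $\sim 2\pi/\log t$, the perturbation is asymptotically much smaller than this spacing, so the strict interlacing of $f^{(n+1)}$ among $f^{(n)}$ survives for $t>t_n$: between consecutive zeros of $Z^{(n)}$ lies exactly one zero of $Z^{(n+1)}$.

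The main obstacle is to ensure \emph{uniformly}, not merely on average, that the distance from each zero of $f^{(n+1)}$ to the nearest zero of $f^{(n)}$ exceeds the perturbation scale $1/(\log t)^{2}$. If an abnormally tight local cluster of alternating zeros of $f^{(n)}$ and $f^{(n+1)}$ occurred for arbitrarily large $t$, the $A$-perturbation could reshuffle them and destroy the interlacing. Ruling this out seems to require combining the Laguerre--Pólya structure of $f^{(n)}$ with sharp pointwise bounds coming from the differentiated Riemann--Siegel formula for $Z^{(n)}$, and delivering the quantitative inner-distance lower bound with the required uniformity is where the real work of the argument lies.
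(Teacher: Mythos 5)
Your first step is sound: under RH, $f(t)=\xi(1/2+it)$ is a real, even, entire function of order one with only real zeros, hence lies in the Laguerre--P\'olya class; this class is closed under differentiation, and the Hadamard product gives $(f^{(n+1)}/f^{(n)})'<0$ between consecutive zeros, so the zeros of $f^{(n+1)}$ strictly interlace those of $f^{(n)}$. The gap is entirely in the transfer to $Z^{(n)}$, and as written it is fatal. Because $A'/A\to-\pi/4\neq0$, the zeros of $f^{(n)}$ and of $Z^{(n)}$ are genuinely different point sets, and your perturbation argument needs three quantitative inputs, none of which is available: (i) a \emph{pointwise} bound on the correction term $\sum_{k\geq1}\binom{n}{k}(A^{(k)}/A)\,Z^{(n-k)}$ near each zero --- but $|Z^{(j)}(t)|\ll(\log t)^{j}$ holds only in a mean-value sense, and pointwise $Z(t)$ can be as large as $t^{o(1)}$ even on RH; (ii) a lower bound of order $(\log t)^{n+1}$ on the slope of $f^{(n)}/A$ at each of its zeros, which is a zero-repulsion statement not known even for $n=0$ (there is no nontrivial lower bound on the gaps $\gamma_{k+1}-\gamma_{k}$ between zeta zeros, even on RH, so clusters cannot be excluded); and (iii) the uniform lower bound on the distance from a zero of $f^{(n+1)}$ to the nearest zero of $f^{(n)}$, which you yourself identify as ``where the real work lies.'' So the proposal is an honest reduction to an open quantitative problem, not a proof.

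The paper avoids any perturbation by running your Laguerre--P\'olya-style monotonicity argument on a function attached directly to $Z^{(n)}$ rather than to $\xi^{(n)}$. It builds $f_{n}$, $h_{n}$, $g_{n}=f_{n}/h_{n}$ with $Z^{(n)}(t)=i^{n}f_{n}(1/2+it)e^{i\theta(t)}$ and $h_{n}$ nonvanishing in the relevant strip for large $t$, shows (via Matsumoto--Tanigawa) that under RH the zeros of $g_{n}$ there lie on the critical line, and derives a partial-fraction expansion of $G_{n}'/G_{n}$ for $G_{n}=h\,g_{n}$; differentiating it yields $\frac{d}{dt}\bigl(Z^{(n+1)}/Z^{(n)}\bigr)=-\sum_{\gamma}(t-\gamma)^{-2}+O(t^{-1})<0$ for large $t$, and strict decrease of $Z^{(n+1)}/Z^{(n)}$ between its consecutive poles gives exactly one zero of $Z^{(n+1)}$ in each gap. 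The lesson is that the monotone-logarithmic-derivative idea is the right one, but it must be applied to a completed function whose critical-line zeros \emph{are} the zeros of $Z^{(n)}$; if you want to salvage your route, replace $\xi^{(n)}$ by such a function rather than trying to control the discrepancy after the fact.
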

The case $n=1$ is the result of Anderson [1, Theorem 3]. R. J. Anderson \cite{Anderson} constructed and studied the meromorphic function $\eta(s)$. K. Matsumoto and Y. Tanigawa \cite{Matsumoto} introduced a function $\eta_{n}(s)$ which is a generalization of Anderson's $\eta(s)$. These functions played an important role to show their results.  We will define the function $g_{n}(s)$ which has properties similar to those of Anderson's $\eta(s)$ in Section 2 and this section will be the most essential part of our proof. Theorem 1.1 will be proved in the last section after preparing some auxiliary results in Sections 3-5. These have been inspired by the proof of Anderson \cite{Anderson}.
\begin{section}
{THE DEFINITION OF FUNCTIONS}
\end{section}
Let $\chi(s)=h(1-s)/h(s)$ and $\omega(s)=(\chi'/\chi)(s)$. We see that 
\begin{equation}
\omega(1/2+it)=-2\theta'(t)\label{eq:ome1},
\end{equation}
and
\begin{equation}
\omega(1-s)=\omega(s)\label{eq:ome2}.
\end{equation}
 Now let $f_{0}(s)=\zeta(s)$, and we define $f_{n}(s)$ for $n\geq 1$ recursively by
\begin{equation}
f_{n+1}(s)=f_{n}'(s)-\frac{1}{2}\omega(s)f_{n}(s)\hspace{10mm}(n\geq 0)\label{eq:fdef}.
\end{equation}
Let $h_{0}(s)=1$, and we define $h_{n}(s)$ for $n\geq 1$ recursively by
\begin{equation}
h_{n+1}(s)=h_{n}'(s)-\frac{1}{2}\omega(s)h_{n}(s)\hspace{10mm}(n\geq 0)\label{eq:hdef}.
\end{equation}
We denote $g_{n}(s)$ by $f_{n}(s)/h_{n}(s)$. 
\begin{proposition}
For any non-negative integer $n$, we have
\begin{equation}
Z^{(n)}(t)=i^{n}f_{n}\Bigl(\frac{1}{2}+it\Bigl)e^{i\theta(t)}\label{eq:proz}.
\end{equation}
\end{proposition}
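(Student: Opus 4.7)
The plan is to prove the identity by induction on $n$, with the base case essentially being the definition of $Z(t)$ and the inductive step reducing to the chain rule together with equations \eqref{eq:ome1} and \eqref{eq:fdef}.

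For the base case $n=0$, note that $i^{0}=1$ and $f_{0}=\zeta$, so \eqref{eq:proz} becomes exactly the definition $Z(t)=e^{i\theta(t)}\zeta(1/2+it)$, which is immediate.

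For the inductive step, I would assume \eqref{eq:proz} holds for some $n\geq 0$ and differentiate both sides with respect to $t$. The right-hand side yields
\[
\frac{d}{dt}\Bigl[i^{n}f_{n}\Bigl(\tfrac{1}{2}+it\Bigr)e^{i\theta(t)}\Bigr]
= i^{n+1}e^{i\theta(t)}\Bigl[f_{n}'\Bigl(\tfrac{1}{2}+it\Bigr) + \theta'(t)f_{n}\Bigl(\tfrac{1}{2}+it\Bigr)\Bigr],
\]
after factoring $i^{n+1}e^{i\theta(t)}$ from the chain-rule expansion (the factor $i$ from differentiating $f_{n}(1/2+it)$ and the factor $i\theta'(t)$ from differentiating $e^{i\theta(t)}$). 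Now invoking \eqref{eq:ome1} in the form $\theta'(t) = -\tfrac{1}{2}\omega(1/2+it)$, the bracket becomes
\[
f_{n}'\Bigl(\tfrac{1}{2}+it\Bigr) - \tfrac{1}{2}\omega\Bigl(\tfrac{1}{2}+it\Bigr)f_{n}\Bigl(\tfrac{1}{2}+it\Bigr),
\]
which by \eqref{eq:fdef} is exactly $f_{n+1}(1/2+it)$. This gives $Z^{(n+1)}(t)=i^{n+1}f_{n+1}(1/2+it)e^{i\theta(t)}$, completing the induction.

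There is no real obstacle here; the proof is purely mechanical once one observes that the recursion \eqref{eq:fdef} is designed precisely so that $f_{n}'-\tfrac{1}{2}\omega f_{n}$ absorbs the $\theta'(t)$ term produced when differentiating $e^{i\theta(t)}$. The only mildly delicate point is keeping track of the factor of $i$ that appears at each differentiation (since the argument of $f_{n}$ is $1/2+it$, not $t$), which accounts for the $i^{n}$ in the statement.
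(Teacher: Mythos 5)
Your proof is correct and is essentially identical to the paper's: both proceed by induction on $n$, differentiate the inductive hypothesis, and use $\theta'(t)=-\tfrac{1}{2}\omega(1/2+it)$ from \eqref{eq:ome1} so that the recursion \eqref{eq:fdef} produces $f_{n+1}$. No issues.
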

\begin{proof}
The case $n=0$ is the definition of $Z(t)$. Assume that (\ref{eq:proz}) is valid for $n$. Then
  $$Z^{(n+1)}(t)=\Bigl(i^{n+1}f_{n}'\Bigl(\frac{1}{2}+it\Bigl)+i^{n+1}\theta'(t)f_{n}\Bigl(\frac{1}{2}+it\Bigl)\Bigl)e^{i\theta(t)}.$$
  From (\ref{eq:ome1}), we find that (\ref{eq:proz}) is valid for $n+1$. Hence the result follows.
\end{proof}
Matsumoto and Tanigawa \cite{Matsumoto} defined a meromorphic function $\eta_{n}(s)$ which has the property
$$Z^{(n)}(t)=i^{n}\theta'(t)\eta_{n}(1/2+it)e^{i\theta(t)}.$$
From (\ref{eq:ome1}) and (\ref{eq:proz}) we have $f_{n}(s)=-\omega(s)\eta_{n}(s)/2$.
\begin{proposition}
For any non-negative integer $n$, we have
\begin{equation}
\chi(s)f_{n}(1-s)=(-1)^{n}f_{n}(s)\label{eq:funeq}.
\end{equation}
\end{proposition}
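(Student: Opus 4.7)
The plan is to prove (\ref{eq:funeq}) by induction on $n$, using the functional equation as the base case and exploiting the symmetry relations $\omega(1-s)=\omega(s)$ and $\chi'(s)/\chi(s) = \omega(s)$ in the inductive step together with the recursion (\ref{eq:fdef}).

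For the base case $n=0$, I note that the functional equation $h(s)\zeta(s)=h(1-s)\zeta(1-s)$ rewrites as $\zeta(s)=\chi(s)\zeta(1-s)$, which is exactly $\chi(s)f_{0}(1-s)=(-1)^{0}f_{0}(s)$.

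For the inductive step, assume $\chi(s)f_{n}(1-s)=(-1)^{n}f_{n}(s)$. The natural move is to differentiate both sides in $s$. On the left, I get
$$\chi'(s)f_{n}(1-s)-\chi(s)f_{n}'(1-s) = (-1)^{n}f_{n}'(s),$$
where the minus sign on the second term comes from the chain rule applied to $f_{n}(1-s)$, and $f_{n}'(1-s)$ denotes the derivative of $f_{n}$ evaluated at $1-s$. Using $\chi'(s)=\omega(s)\chi(s)$ and the induction hypothesis, I can solve for $\chi(s)f_{n}'(1-s)$ in terms of $f_{n}(s)$ and $f_{n}'(s)$:
$$\chi(s)f_{n}'(1-s) = (-1)^{n}\omega(s)f_{n}(s)-(-1)^{n}f_{n}'(s).$$

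Now I apply the recursion (\ref{eq:fdef}) at the point $1-s$ to write
$$\chi(s)f_{n+1}(1-s)=\chi(s)f_{n}'(1-s)-\tfrac{1}{2}\omega(1-s)\chi(s)f_{n}(1-s),$$
substitute the expression just derived for $\chi(s)f_{n}'(1-s)$, use (\ref{eq:ome2}) to replace $\omega(1-s)$ by $\omega(s)$, and apply the induction hypothesis to the remaining $\chi(s)f_{n}(1-s)$. The $\omega(s)f_{n}(s)$ contributions combine as $(-1)^{n}(\omega(s)-\tfrac{1}{2}\omega(s))f_{n}(s)=(-1)^{n}\tfrac{1}{2}\omega(s)f_{n}(s)$, which together with the $-(-1)^{n}f_{n}'(s)$ term yields exactly $-(-1)^{n}\bigl(f_{n}'(s)-\tfrac{1}{2}\omega(s)f_{n}(s)\bigr)=(-1)^{n+1}f_{n+1}(s)$, completing the induction. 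There is no real obstacle here; the calculation is routine once one notices that the chain-rule sign from differentiating $f_{n}(1-s)$ is precisely what flips $(-1)^{n}$ into $(-1)^{n+1}$, and that the two symmetries (\ref{eq:ome1})--(\ref{eq:ome2}) of $\omega$ are exactly what is needed to match the recursion on both sides.
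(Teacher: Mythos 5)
Your induction is correct: the base case is the functional equation in the form $\zeta(s)=\chi(s)\zeta(1-s)$, and in the inductive step the chain-rule sign from $\tfrac{d}{ds}f_{n}(1-s)=-f_{n}'(1-s)$, the identity $\chi'(s)=\omega(s)\chi(s)$, and the symmetry $\omega(1-s)=\omega(s)$ combine exactly as you say to produce $(-1)^{n+1}f_{n+1}(s)$. This is, however, a genuinely different route from the paper's: the paper does not compute anything, but instead invokes the functional equation for $\eta_{n}(s)$ from Matsumoto and Tanigawa together with the relation $f_{n}(s)=-\omega(s)\eta_{n}(s)/2$ and (\ref{eq:ome2}), so its proof is a one-line reduction to an external result, while yours is self-contained and makes visible why the sign $(-1)^{n}$ appears (it is precisely the chain-rule sign accumulating once per differentiation). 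Your argument is arguably preferable for a reader who does not have the cited paper at hand. One small slip in the write-up: at the end you attribute the needed symmetries to ``(\ref{eq:ome1})--(\ref{eq:ome2})'', but (\ref{eq:ome1}) (the relation $\omega(1/2+it)=-2\theta'(t)$) plays no role in your argument; only (\ref{eq:ome2}) and the definition $\omega=\chi'/\chi$ are used.
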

\begin{proof}
The case $n=0$ is nothing but the functional equation for the Riemann zeta-function. From (\ref{eq:ome2}) and the functional equation for $\eta_{n}(s)$ (see Matsumoto and Tanigawa [6, Proposition 2]), we obtain the result.
\end{proof}
\begin{remark}
From (\ref{eq:ome2}) and the definition of $g_{n}(s)$, if $n=1$ the formula which is of the same form as (\ref{eq:funeq}) but with replacing $f_{1}(s)$ by $g_{1}(s)$ is also valid. This is the functional equation for $\eta(s)$ (see Anderson \cite{Anderson}). But for $n\geq 2$ we can not replace $f_{n}(s)$ by $g_{n}(s)$ in (\ref{eq:funeq}).
\end{remark} 
From (\ref{eq:fdef}) we see that $f_{n}(s)$ can be expressed as
 \begin{equation}
 f_{n}(s)=\sum_{k=0}^{n}a_{n,k}(s)\zeta^{(k)}(s),\label{eq:f1}
 \end{equation}
 where $a_{n,k}(s)$ is a polynomial in the variables $\omega(s),\omega'(s),\cdots , \omega^{(n)}(s)$ with constant coefficients and we denote
 \begin{equation}
 a_{n,k}(s)=\sum_{h=0}^{n}c_{n,k,h}(s)\omega^{h}(s) ,\label{eq:a1}
 \end{equation} 
 where $c_{n,k,h}(s)$ is a polynomial in the variables $\omega'(s),\omega''(s),\cdots , \omega^{(n)}(s)$ with constant coefficients. It is easy to see that $a_{n,0}(s)=h_{n}(s)$ and hence we have
 $$g_{n}(s)=\zeta(s)+\sum_{k=1}^{n}\frac{a_{n,k}(s)}{h_{n}(s)}\zeta^{(k)}(s).$$
 We stress that the coefficient of $\zeta(s)$ is 1, which enables us to make use of a method in the theory of the Riemann zeta-function.
\begin{section}
{BASIC PROPERTIES OF $f_{n}(s),g_{n}(s)$ AND $h_{n}(s)$}
\end{section}
We denote by $n$ a positive integer and Landau's symbol $O$ depends on $n$. It is well known that 
\begin{equation}
\chi(s)=2^{s}\pi^{s-1}\sin(\pi s/2)\Gamma(1-s),\label{eq:chi1}
\end{equation}
 and it follows that 
\begin{equation}
\omega(s)=\log(2\pi)+\frac{\pi}{2}\tan\Bigl(\frac{\pi s}{2}\Bigl)-\frac{\Gamma'}{\Gamma}(s)\label{eq:ome3}.
\end{equation}
It is known that
      \begin{equation}
      \log \Gamma(s)=\left(s-\frac{1}{2}\right)\log s-s+\frac{1}{12s}-\int_{0}^{\infty}\frac{P(x)}{(s+x)^{3}}dx,\label{eq:x8}
      \end{equation}
       where $P(x)$ is a certain periodic function (see Edwards [3, p.109]). Hence we have
\begin{equation}
\frac{\Gamma'}{\Gamma}(s)=\log |s|+O(1)\hspace{10mm}(\sigma>1/4),\label{eq:gam1}
\end{equation}
and
\begin{equation}
\frac{d^{n}}{ds^{n}}\frac{\Gamma'}{\Gamma}(s)=O(|s|^{-n})\hspace{10mm}(n\geq 1, \sigma>1/4).\label{eq:gam3}
\end{equation}
Define the set $D$ by removing all small circles whose centers are odd positive integers and  even non-positive integers from the complex plane. We denote $D_{1}$ by $\mathbb{C}-D$.
\begin{lemma}
In the region $\{s\in D| \sigma>1/4, t>0\}$ we have
\begin{equation}
\tan s=i+O(e^{-2t}),\label{eq:tan1}
\end{equation}
and
\begin{equation}
\frac{d^{n}}{ds^{n}}\tan s=O(e^{-2t})\hspace{10mm}(n\geq 1).\label{eq:tan2}
\end{equation}
\end{lemma}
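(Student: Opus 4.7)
The proof is a direct exponential computation. First I would write
$$\tan s \;=\; \frac{\sin s}{\cos s} \;=\; \frac{e^{is}-e^{-is}}{i(e^{is}+e^{-is})} \;=\; \frac{e^{2is}-1}{i(e^{2is}+1)},$$
after multiplying numerator and denominator by $e^{is}$. A short algebraic manipulation then yields the closed form
$$\tan s - i \;=\; \frac{-2ie^{2is}}{1+e^{2is}}.$$
For $s=\sigma+it$ with $t>0$ bounded away from $0$, we have $|e^{2is}|=e^{-2t}$ and the denominator is uniformly bounded below, since $|1+e^{2is}|\geq 1-e^{-2t}$ stays away from $0$. Hence the right-hand side is $O(e^{-2t})$, giving (\ref{eq:tan1}).

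For the derivatives I would expand via a geometric series: for $t>0$,
$$\tan s \;=\; i \;-\; 2i\sum_{m=1}^{\infty}(-1)^{m-1}e^{2ims},$$
the sum converging absolutely and uniformly on any half-plane $t\geq t_{0}>0$. Differentiating termwise is then justified and gives
$$\frac{d^{n}}{ds^{n}}\tan s \;=\; -2i\sum_{m=1}^{\infty}(-1)^{m-1}(2im)^{n}e^{2ims},$$
whose modulus is at most $2\sum_{m\geq 1}(2m)^{n}e^{-2mt}$. Factoring out $e^{-2t}$ and comparing the residual series with a convergent one depending only on $n$, I obtain (\ref{eq:tan2}), with the implied constant depending on $n$.

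I do not foresee any real obstacle. The restriction $s\in D$ with $\sigma>1/4$ plays no role for $\tan s$ itself, whose only singularities lie on the real axis; it is kept only because the lemma will later be combined with (\ref{eq:gam1}) and (\ref{eq:gam3}) in order to control $\omega(s)$ through (\ref{eq:ome3}). The only mildly subtle point is the justification of termwise differentiation, which is immediate from uniform convergence of the geometric series on the relevant region.
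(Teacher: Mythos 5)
Your derivation of (\ref{eq:tan1}) is exactly the paper's: the author writes $\tan s=-i\bigl(-1+\frac{2e^{2i\sigma-2t}}{e^{2i\sigma-2t}+1}\bigr)$, which is precisely your closed form $\tan s-i=\frac{-2ie^{2is}}{1+e^{2is}}$. For (\ref{eq:tan2}), however, you take a genuinely different route. The paper avoids series entirely: from $\tan's=\tan^{2}s+1=(\tan s-i)(\tan s+i)$ and (\ref{eq:tan1}) it gets $\tan's=O(e^{-2t})$ directly, then differentiates $\tan''=2\tan\tan'$ repeatedly with Leibniz's rule and runs an induction, every term of the resulting sum containing at least one derivative factor of order $\geq 1$ and hence a factor $O(e^{-2t})$. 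Your geometric-series expansion with termwise differentiation is equally valid and has the advantage of producing explicit constants in one stroke, at the price of justifying termwise differentiation (routine, as you say) and of the tail estimate $\sum_{m\geq1}(2m)^{n}e^{-2mt}\ll_{n}e^{-2t}$, which as you note requires $t\geq t_{0}>0$; the paper's induction transfers the uniformity of (\ref{eq:tan1}) to the derivatives automatically. One small caution: your remark that the restriction to $D$ ``plays no role'' is not quite right for small $t$. Both (\ref{eq:tan1}) and (\ref{eq:tan2}) with a uniform implied constant fail near the real poles of $\tan$, where $1+e^{2is}$ vanishes, and it is exactly the circles excluded by $D$ (after the substitution $s\mapsto\pi s/2$ made in (\ref{eq:ome3}), whose poles sit at the odd positive integers) that keep the denominator bounded below in the strip $0<t<t_{0}$. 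The paper's own proof glosses over the same point, so this is a shared imprecision rather than a gap specific to your argument, but your claim that $D$ is irrelevant should be softened.
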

\begin{proof}
Since
$$\tan s=-i\Bigl(-1+\frac{2e^{2i\sigma -2t}}{e^{2i \sigma -2t}+1}\Bigl),$$
(\ref{eq:tan1}) immediately follows. Next we show (\ref{eq:tan2}). Since $\tan'(x)=\tan^{2}(x)+1$, we have $\tan'(s)=O(e^{-2t})$ and
\begin{equation}
\tan''(x)=2\tan(x)\tan'(x).\label{eq:tan3}
\end{equation} 
So the case $n=2$ follows. Assume that  (\ref{eq:tan2}) is valid for $n+2$. Differentiating (\ref{eq:tan3}) $n+1$ times we see that
\begin{align*}
\tan^{(n+3)}(x)&=2\sum_{k=0}^{n+1}\binom{n+1}{k}\tan^{(k)}(x)\tan^{(n-k+2)}(x)\\
&=2\tan^{(n+2)}(x)\tan(x)+2\sum_{k=1}^{n+1}\binom{n+1}{k}\tan^{(k)}(x)\tan^{(n-k+2)}(x).
\end{align*}
We find that (\ref{eq:tan2}) is valid for $n + 3$. This proves the lemma.
\end{proof}
\begin{lemma}
For $s\in D$, we have
 \begin{equation}
 \omega(s)=-\log |s|+O(1), \label{eq:ome4}
 \end{equation}
 and
 \begin{equation}
 \omega^{(n)}(s)=O(1)\hspace{10mm}(n\geq 1).\label{eq:ome5}
 \end{equation}
 \end{lemma}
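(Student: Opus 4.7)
My plan is to reduce everything to the explicit formula (\ref{eq:ome3}),
$$\omega(s)=\log(2\pi)+\frac{\pi}{2}\tan(\pi s/2)-\frac{\Gamma'}{\Gamma}(s),$$
estimate the two non-constant pieces separately, and then use the symmetry $\omega(s)=\omega(1-s)$ from (\ref{eq:ome2}) together with complex conjugation to cover the whole of $D$.

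I would first treat $s\in D$ with $\sigma\ge 1/4$ and $t\ge 0$. The argument behind (\ref{eq:tan1}), applied to the variable $\pi s/2$, shows that $\tan(\pi s/2)=i+O(e^{-\pi t})$, which is bounded for $t\ge 1$; on the remaining strip $0\le t\le 1$, $\tan(\pi s/2)$ is periodic in $\sigma$ with period $2$ and its poles are the odd integers, which are excluded from $D$, so it is bounded there as well. Combined with (\ref{eq:gam1}), this yields (\ref{eq:ome4}) throughout this region.

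For $\sigma<1/4$ I would apply (\ref{eq:ome2}): $\omega(s)=\omega(1-s)$. The set of excluded points of $D$ is preserved by $s\mapsto 1-s$ (since $1-(2k+1)=-2k$ and $1-(-2k)=2k+1$), hence $1-s\in D$ and $\Re(1-s)>3/4$, putting us in the first case. Because $D$ excludes a disk around $0$, $|s|$ is bounded below on $D$; hence $\log|1-s|=\log|s|+O(1)$ uniformly on $D$; together with the identity $\omega(\bar s)=\overline{\omega(s)}$ to handle $t<0$, this establishes (\ref{eq:ome4}).

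The derivative estimate (\ref{eq:ome5}) follows the same pattern: differentiating (\ref{eq:ome3}), the tangent contribution is a constant-coefficient polynomial in the derivatives of $\tan(\pi s/2)$, which is $O(e^{-\pi t})$ for $\sigma\ge 1/4$, $t>0$ by (\ref{eq:tan2}) and bounded on the remaining bounded-$t$ strip by the same periodicity-plus-removed-disks argument; the gamma contribution is $(\Gamma'/\Gamma)^{(n)}(s)=O(|s|^{-n})=O(1)$ by (\ref{eq:gam3}) combined with the lower bound on $|s|$. The symmetry (\ref{eq:ome2}) and conjugation then extend the bound to all of $D$. I do not anticipate a genuine obstacle; the main care is to uniformize the implied constants across the piecewise treatment and to check that $s\mapsto 1-s$ and $s\mapsto\bar s$ really preserve the removed disks of $D$.
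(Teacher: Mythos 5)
Your proof is correct and follows essentially the same route as the paper: estimate the tangent and digamma terms in (\ref{eq:ome3}) for $\sigma>1/4$ via Lemma 3.1, (\ref{eq:gam1}) and (\ref{eq:gam3}), then extend to the rest of $D$ using the symmetry (\ref{eq:ome2}). The extra checks you flag (conjugation for $t<0$, and that $s\mapsto 1-s$ permutes the removed disks) are left implicit in the paper but are exactly the right points to verify.
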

 \begin{proof}
 From  the previous lemma and (\ref{eq:ome3}), we can prove the lemma if $s$ is in $ D\cap\{s|\sigma>1/4\}$. But considering equation (\ref{eq:ome2}) the lemma follows.
 \end{proof}
  If $k\neq 0$ and $n\geq 1$ then $c_{n,k,n}(s)=0$, hence with Lemma 3.2 if $s\in D$ we have
\begin{equation}
a_{n,k}(s)=O((\log|s|)^{n-1})\hspace{10mm}(k\neq 0, n\geq 1)\label{eq:a2}.
\end{equation}
  From (\ref{eq:hdef}) if $n\geq 1$ then $c_{n,0,n}(s)=(-1)^{n}/2^{n}$, hence with Lemma 3.2 and (\ref{eq:a1}) if $s\in D$ we have
 \begin{equation}
 h_{n}(s)=a_{n,0}(s)=\left(\frac{\log|s|}{2}\right)^{n}+O((\log|s|)^{n-1})\hspace{10mm}(n\geq 1)\label{eq:h1}.
 \end{equation}
 \begin{lemma}
 We have
 $$\zeta(s)=1+O(2^{-\sigma})\hspace{10mm}(\sigma>2),$$
 and
 $$\zeta^{(n)}(s)=O(2^{-\sigma})\hspace{10mm}(n\geq 1, \sigma>2).$$
 \end{lemma}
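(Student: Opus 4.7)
The plan is to work directly from the absolutely convergent Dirichlet series
\[
\zeta(s) = \sum_{m=1}^{\infty} m^{-s}, \qquad \zeta^{(n)}(s) = (-1)^{n}\sum_{m=1}^{\infty} (\log m)^{n} m^{-s} \qquad (\sigma > 1);
\]
in the second series the $m = 1$ term vanishes because $\log 1 = 0$. Estimating termwise in absolute value, $|m^{-s}| = m^{-\sigma}$, so the statement reduces to bounding $\sum_{m \geq 2} m^{-\sigma}$ and $\sum_{m \geq 2} (\log m)^{n} m^{-\sigma}$ by $O(2^{-\sigma})$ under the hypothesis $\sigma > 2$.

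For the first assertion I would split off the $m = 2$ term, which contributes exactly $2^{-\sigma}$, and bound the remaining tail by the integral comparison
\[
\sum_{m = 3}^{\infty} m^{-\sigma} \leq \int_{2}^{\infty} x^{-\sigma}\,dx = \frac{2^{1-\sigma}}{\sigma - 1} \leq 2\cdot 2^{-\sigma},
\]
which is valid for $\sigma > 2$. Adding the two contributions yields $\zeta(s) = 1 + O(2^{-\sigma})$.

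For the derivatives the same strategy applies. The $m = 2$ term is $(\log 2)^{n} 2^{-\sigma} = O(2^{-\sigma})$. For the remaining tail I would use the elementary inequality $m^{-\sigma} = m^{-2}\, m^{-(\sigma - 2)} \leq 2^{-(\sigma - 2)} m^{-2} = 4\cdot 2^{-\sigma}\, m^{-2}$, valid whenever $m \geq 2$ and $\sigma > 2$. Then
\[
\sum_{m \geq 3} (\log m)^{n} m^{-\sigma} \leq 4\cdot 2^{-\sigma} \sum_{m \geq 3} \frac{(\log m)^{n}}{m^{2}},
\]
and the final sum converges to a constant depending only on $n$, which delivers the claimed bound.

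The lemma is a purely elementary Dirichlet-series estimate, so no genuine obstacle is anticipated. The one point that calls for care is making the factor $2^{-\sigma}$ explicit rather than burying it inside a generic $O$-constant; this is exactly why I would isolate the $m = 2$ term before applying the integral or $m^{-2}$ comparison to the remainder.
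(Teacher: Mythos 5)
Your proof is correct, and while it starts from the same place as the paper (termwise absolute values in the Dirichlet series, with the $m=1$ term isolated), the tail estimate is handled by a genuinely different device. The paper bounds $\sum_{k\ge 2}(\log k)^n k^{-\sigma}$ by comparing with $\int_2^\infty x^{-\sigma}(\log x)^n\,dx$ and evaluating that integral through a recursion (integration by parts in $n$); this requires checking that $x^{-\sigma}(\log x)^n$ is decreasing, which the paper only verifies for $\sigma\ge n/\log 2$, leaving the range $2<\sigma<n/\log 2$ to be absorbed into the $O$-constant via boundedness. Your factorization $m^{-\sigma}\le 4\cdot 2^{-\sigma}m^{-2}$ for $m\ge 2$ sidesteps monotonicity entirely, works uniformly for all $\sigma>2$, and reduces everything to the single convergent constant $\sum_{m}(\log m)^n m^{-2}$ depending only on $n$; it is the cleaner of the two arguments, at the cost of a slightly lossier constant. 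Both yield the stated bounds.
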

 \begin{proof}
 From the definition of the Riemann zeta-function we have
 $$\zeta(s)=1+\sum_{k=2}^{\infty}\frac{1}{k^{s}},$$
 and differentiating it $n$ times we get
 $$|\zeta^{(n)}(s)|\leq\sum_{k=2}^{\infty}\frac{(\log k)^{n}}{k^{\sigma}}.$$
 Let $f(x)=x^{-\sigma}(\log x)^{n}$. Since $f'(x)=x^{-\sigma-1}(\log x)^{n-1}(n-\sigma\log x)$, if $\sigma\geq n/\log 2$ then $f(x)$ is decreasing on $x\geq 2$. Hence with
 $$\int_{2}^{\infty}x^{-\sigma}(\log x)^{n}dx=\frac{2^{-\sigma+1}}{\sigma-1}(\log 2)^{n}+\frac{n}{\sigma-1}\int_{2}^{\infty}x^{-\sigma}(\log x)^{n-1}dx,$$
 we obtain the result.
 \end{proof}
  From (\ref{eq:f1}), (\ref{eq:a1}), (\ref{eq:a2}), (\ref{eq:h1}) and Lemma 3.3 , if $s\in D\cap\{s|\sigma>2\}$ we have
  \begin{align}
  f_{n}(s)&=\left(\left(\frac{\log |s|}{2}\right)^{n}+O((\log|s|)^{n-1})\right)(1+O(2^{-\sigma}))+\sum_{k=1}^{n}O\left(\frac{(\log|s|)^{n-1}}{2^{\sigma}}\right)\notag\\
  &=(1+O(2^{-\sigma}))\left(\frac{\log |s|}{2}\right)^{n}+O((\log |s|)^{n-1}).\label{eq:f2}
  \end{align}
    \begin{section}
  {ZEROS AND POLES OF $f_{n}(s), g_{n}(s), h_{n}(s)$}
  \end{section}
 From (\ref{eq:chi1}) we have 
  \begin{lemma}
  (Matsumoto and Tanigawa \cite{Matsumoto} Lemma 1) The poles of $\omega(s)$ are all simple, and are located at $1, 3, 5, \cdots$ (with residue $-1$) and at $0, -2, -4, \cdots$ (with residue $1$).
  \end{lemma}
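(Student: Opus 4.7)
The plan is to read off the poles and residues directly from the explicit expression \eqref{eq:ome3} for $\omega(s)$. The constant term $\log(2\pi)$ is entire, so all singular behaviour of $\omega$ must come from the two non-trivial pieces $\frac{\pi}{2}\tan(\pi s/2)$ and $-\frac{\Gamma'}{\Gamma}(s)$, whose polar structure is classical.

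First I would list the poles of each piece separately. The function $\tan(\pi s/2)$ has simple poles at every odd integer $s=2k+1$, $k\in\mathbb{Z}$; near such a point one has $\tan(\pi s/2)\sim -\frac{2}{\pi(s-(2k+1))}$, so $\frac{\pi}{2}\tan(\pi s/2)$ is simple with residue $-1$ at each odd integer, positive or negative. The digamma function $\frac{\Gamma'}{\Gamma}(s)$ has simple poles at $s=0,-1,-2,\ldots$ each with residue $-1$, so $-\frac{\Gamma'}{\Gamma}(s)$ is simple with residue $+1$ at every non-positive integer.

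Now I would combine these contributions. At the positive odd integers $1,3,5,\ldots$ only the tangent term is singular, which gives $\omega(s)$ a simple pole of residue $-1$. At the non-positive even integers $0,-2,-4,\ldots$ only the digamma term is singular, which gives $\omega(s)$ a simple pole of residue $+1$. At the negative odd integers $-1,-3,-5,\ldots$ both terms are singular and their residues $-1$ and $+1$ cancel exactly, so $\omega(s)$ is regular there. Since no other poles are possible, this is precisely the list of poles and residues claimed.

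The argument is essentially a routine residue calculation, and the only step meriting care is the cancellation at the negative odd integers. As a sanity check one can alternatively return to \eqref{eq:chi1} and note that the zeros of $\sin(\pi s/2)$ at $s=-1,-3,\ldots$ kill the corresponding poles of $\Gamma(1-s)$ in $\chi(s)$ itself, so $\chi$ (and hence its logarithmic derivative $\omega$) is holomorphic at those points.
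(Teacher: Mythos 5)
Your main argument is correct and is essentially the intended one: the paper itself gives no proof beyond ``From (\ref{eq:chi1}) we have \dots'' and a citation to Matsumoto--Tanigawa, and reading the polar structure off the explicit formula (\ref{eq:ome3}) is a faithful way to carry that out. The residue computations for $\frac{\pi}{2}\tan(\pi s/2)$ (residue $-1$ at every odd integer) and for $-\frac{\Gamma'}{\Gamma}(s)$ (residue $+1$ at every non-positive integer) are right, and you correctly observe that at the negative odd integers the two simple poles cancel, leaving a removable singularity; since both pieces have only simple poles, residue zero does indeed force regularity there, so that step is sound.

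However, your closing ``sanity check'' is factually wrong as stated. The function $\sin(\pi s/2)$ vanishes at the \emph{even} integers, not at $s=-1,-3,\dots$ (indeed $\sin(-\pi/2)=-1$), and $\Gamma(1-s)$ has its poles at $s=1,2,3,\dots$, so there is nothing at the negative odd integers for anything to ``kill''. The correct check via (\ref{eq:chi1}) runs as follows: the poles of $\Gamma(1-s)$ at the positive \emph{even} integers are cancelled by the zeros of $\sin(\pi s/2)$ there, so $\chi(s)$ has simple poles exactly at $1,3,5,\dots$ and simple zeros exactly at $0,-2,-4,\dots$ (the remaining zeros of $\sin(\pi s/2)$), and is holomorphic and non-vanishing elsewhere; since the logarithmic derivative of a meromorphic function has a simple pole of residue $+k$ at a zero of order $k$ and residue $-k$ at a pole of order $k$, this gives the lemma directly. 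That route is actually slightly cleaner than the one through (\ref{eq:ome3}), because no cancellation at the negative odd integers needs to be checked. Since the erroneous sentence is only a supplementary check, your proof stands once that sentence is corrected or deleted.
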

   First we investigate the poles and zeros of $h_{n}(s)$. 
   \begin{lemma}
   The function $h_{n}(s)$ has poles of order $n$ which are located only at $1, 3, 5, \cdots, 0, -2, -4, \cdots.$ 
   \end{lemma}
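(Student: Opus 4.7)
The plan is to proceed by induction on $n$, exploiting the recursion $h_{n+1}(s)=h_n'(s)-\tfrac{1}{2}\omega(s)h_n(s)$ together with Lemma 4.1. For the base case $n=1$, the recursion and $h_0\equiv 1$ give $h_1(s)=-\tfrac{1}{2}\omega(s)$, so Lemma 4.1 immediately yields that $h_1$ has simple poles precisely at the odd positive and even non-positive integers.

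For the inductive step, suppose $h_n(s)$ has poles of order exactly $n$ at each $s_0\in\{1,3,5,\dots\}\cup\{0,-2,-4,\dots\}$ and is holomorphic elsewhere. Since $\omega(s)$ and $h_n(s)$ are both meromorphic with their pole set contained in this same discrete set, $h_{n+1}(s)$ can only have poles there as well. It remains to show that at each such $s_0$ the pole of $h_{n+1}$ has order exactly $n+1$, i.e.\ that the naturally-occurring leading coefficient does not vanish.

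To this end, fix such an $s_0$ and write the Laurent expansions
\[
\omega(s)=\frac{c}{s-s_0}+O(1),\qquad h_n(s)=\frac{A_n}{(s-s_0)^n}+O\!\left((s-s_0)^{-n+1}\right),
\]
where $c=-1$ for $s_0$ odd positive and $c=+1$ for $s_0$ even non-positive (Lemma 4.1), and $A_n\neq 0$ by the induction hypothesis. Differentiating gives
\[
h_n'(s)=-\frac{n A_n}{(s-s_0)^{n+1}}+O\!\left((s-s_0)^{-n}\right),
\]
and multiplying,
\[
\frac{1}{2}\omega(s)h_n(s)=\frac{c A_n/2}{(s-s_0)^{n+1}}+O\!\left((s-s_0)^{-n}\right).
\]
Subtracting yields
\[
h_{n+1}(s)=-\frac{A_n\bigl(n+c/2\bigr)}{(s-s_0)^{n+1}}+O\!\left((s-s_0)^{-n}\right).
\]
Since $c\in\{+1,-1\}$ and $n\geq 1$, the factor $n+c/2$ equals either $n-\tfrac{1}{2}$ or $n+\tfrac{1}{2}$, both nonzero. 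Hence the leading coefficient is nonzero, so $h_{n+1}$ has a pole of order exactly $n+1$ at $s_0$, closing the induction.

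The only real substance is the non-vanishing of $n+c/2$; everything else is bookkeeping about which points can support poles. Because the two residue values $c=\pm 1$ provided by Lemma 4.1 both keep $n+c/2$ away from zero for every $n\geq 1$, no degenerate cancellation can occur, and the induction goes through without further obstruction. A byproduct of the argument is an explicit recursion for the leading Laurent coefficients of $h_n$ at each pole, which will be useful later when studying $g_n(s)=f_n(s)/h_n(s)$.
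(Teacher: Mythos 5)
Your proof is correct and follows essentially the same route as the paper's: induction on $n$, expanding $h_n$ and $\omega$ in Laurent series at each pole, and observing that the leading coefficient $-A_n(n\pm\tfrac{1}{2})$ of $h_{n+1}$ cannot vanish since $n\pm\tfrac{1}{2}\neq 0$. The only difference is cosmetic — you make explicit the sign convention $c=\mp 1$ from Lemma 4.1 and the remark that no new poles can appear outside the given discrete set, both of which the paper leaves implicit.
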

   \begin{proof}
   The case $n=1$ is the previous lemma. Assume that this lemma is valid for $n$. Let $a$ be a pole of $h_{n}(s)$. We expand $h_{n}(s)$ in a Laurent series of powers of $s-a$. Thus $h_{n}(s)=c_{n}/(s-a)^{n}+\cdots $, where $c_{n}$ does not vanish. From Lemma 4.1 and (\ref{eq:hdef}) we have 
   $$h_{n+1}(s)=\frac{-nc_{n}\pm\frac{1}{2}c_{n}}{(s-a)^{n+1}}+\cdots.$$
   If $a$ is positive we take plus and $a$ is not positive we take minus. Since $-nc_{n}\pm c_{n}/2$ does not vanish, this lemma is valid for $n+1$. This proves the lemma.
   \end{proof}
   It is difficult to determine the location of zeros of $h_{n}(s)$ exactly but for large $|s|$ we roughly know the location.
   \begin{lemma}
   Let $2m$ be a sufficiently large even integer. In the region $\{s| \sigma\geq2m\}\cup\{s| \sigma\leq1-2m\}$ zeros of $h_{n}(s)$ are all located in $D_{1}$ and the number of those in a circle is $n$. Let $T$ be sufficiently large. In the region $\{s|1-2m<\sigma<2m, |t|>T\}$ there exists no zero of $h_{n}(s)$.
   \end{lemma}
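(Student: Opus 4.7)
The plan is to lean almost entirely on the asymptotic (3.10), namely $h_n(s)=(\log|s|/2)^n+O((\log|s|)^{n-1})$ valid on $D$. The decisive feature is that the leading term $(\log|s|/2)^n$ is a positive real number whenever $|s|>1$, while the error is bounded only in modulus. Consequently, for every $s\in D$ with $|s|$ larger than some constant $M$ depending on $n$, one actually has
$$\mathrm{Re}(h_n(s))\geq\Bigl(\frac{\log|s|}{2}\Bigr)^n-C_n(\log|s|)^{n-1}>0,$$
so in particular $h_n(s)\neq 0$ throughout $D\cap\{|s|\geq M\}$.

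The second assertion of the lemma then drops out at once: for $T$ large, the strip $\{1-2m<\sigma<2m,\ |t|>T\}$ lies in $D$ (the small circles are centered on the real axis), and on it $|s|\geq|t|>T\geq M$, so $h_n$ is zero-free there.

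For the first assertion I would split $\{\sigma\geq 2m\}\cup\{\sigma\leq 1-2m\}$ into its intersections with $D$ and with $D_1$. In the $D$-part every point satisfies $|s|\geq 2m\geq M$, so $h_n\neq 0$; hence any zero in the region must sit inside one of the small circles. To count the zeros in the circle around a pole $a$, I would apply the argument principle with $\gamma$ equal to that boundary. On $\gamma$ the point $s$ lies in $D$ with $|s|\geq |a|-r\geq M$, so $\mathrm{Re}(h_n(s))>0$; the image $h_n(\gamma)$ is therefore confined to the open right half-plane and has winding number $0$ about the origin. Lemma 4.2 supplies exactly $n$ poles (with multiplicity) inside, and the argument principle then produces exactly $n$ zeros.

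The proof is soft because (3.10) does all the work; the only point needing care is uniformity of the bound on the small-circle boundaries. Since those boundaries lie in $D$ and $|s|$ is uniformly large once $m$ is, (3.10) applies verbatim and no further estimate is needed. The main (mild) obstacle is thus merely bookkeeping the constants $m$ and $T$ so that $|s|\geq M$ everywhere required.
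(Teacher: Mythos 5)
Your proposal is correct and is essentially the paper's own argument: the paper likewise deduces from (\ref{eq:h1}) that $\Re h_{n}(s)>0$ on $D$ for $|s|$ large, and then invokes the argument principle together with Lemma 4.2 to count $n$ zeros in each small circle. You have merely written out the winding-number step that the paper leaves implicit.
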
 
   \begin{proof}
   From (\ref{eq:h1}) if $|s|$ is sufficiently large $\Re h_{n}(s)$ is positive in $D$. By the argument principle and the previous lemma the result follows.
   \end{proof}
   Next we investigate the poles and zeros of $f_{n}(s)$.
   \begin{lemma}
   The function $f_{n}(s)$ has poles of order $n$ located at $0, 3, 5, 7, \cdots$ and those of order $n+1$ located at $1$ and those of order $n-1$ located at $-2, -4, -6\cdots.$
   \end{lemma}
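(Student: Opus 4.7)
My plan is to prove this by induction on $n$, using the recursion (\ref{eq:fdef}) together with Lemma 4.1. Because $\zeta$ has its only pole at $s=1$ and $\omega$ has poles confined to $\{1,3,5,\ldots\}\cup\{0,-2,-4,\ldots\}$, a straightforward induction on the recursion shows that every $f_n$ is meromorphic on $\mathbb{C}$ with poles confined to $\{1\}\cup\{3,5,7,\ldots\}\cup\{0,-2,-4,\ldots\}$. Thus I only need to determine the pole order at each of these candidate points.

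The heart of the argument will be a single Laurent-coefficient transition formula. Suppose near a candidate point $a$ one has $\omega(s)=r(s-a)^{-1}+O(1)$ (with $r=-1$ for $a\in\{1,3,5,\ldots\}$ and $r=+1$ for $a\in\{0,-2,-4,\ldots\}$, by Lemma 4.1), and suppose $f_n(s)=c\,(s-a)^{-k}+O((s-a)^{-k+1})$ with $k\geq 1$ and $c\neq 0$. A direct computation from (\ref{eq:fdef}) will yield
\[
f_{n+1}(s)=\left(-k-\frac{r}{2}\right)c\,(s-a)^{-k-1}+O((s-a)^{-k}).
\]
Since $k$ is a positive integer and $r=\pm 1$, the factor $-k-r/2$ never vanishes, so once $f_n$ has a genuine pole at $a$, the order of that pole will increase by exactly one at each further step of the recursion.

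I would then initialize the induction point by point. At $a=1$, $f_0=\zeta$ already has a simple pole, and the transition formula propagates this to a pole of order $n+1$ in $f_n$. At $a=0$ one has $\zeta(0)=-1/2\neq 0$, and a direct computation gives $f_1(s)=1/(4s)+O(1)$; thereafter the transition formula yields pole order $n$ in $f_n$. At $a=3,5,7,\ldots$ the value $\zeta(a)$ is nonzero (by the Euler product), and the same mechanism gives $f_1$ a simple pole with leading coefficient $\zeta(a)/2$, hence pole order $n$ in $f_n$. At $a=-2,-4,\ldots$, however, the simple zero of $\zeta$ exactly cancels the pole of $\omega$ in the product $\omega(s)\zeta(s)$, so $f_1$ turns out to be regular at $a$ with value $\zeta'(a)/2\neq 0$ (using the standard fact that the trivial zeros of $\zeta$ are simple); one further step of the recursion then produces a simple pole in $f_2$, after which the transition formula gives pole order $n-1$ in $f_n$.

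The main obstacle, while still elementary, will be the behavior at the trivial zeros $-2,-4,\ldots$, because one must carry two Laurent coefficients through the first recursion step to verify the cancellation that postpones the appearance of a pole by one iteration and produces the asymmetric exponent $n-1$ in the statement. Every other case is mechanical once the transition formula is in hand.
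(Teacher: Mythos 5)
Your argument is correct, and for the points $1,3,5,\ldots$ it is essentially the paper's argument: the author propagates the pole order through the recursion (\ref{eq:fdef}) exactly as in the proof of Lemma 4.2, which is your transition formula $(-k-r/2)c\neq 0$ for the leading Laurent coefficient. Where you genuinely diverge is at $0,-2,-4,\ldots$. The paper does not touch these points directly: it invokes the functional equation (\ref{eq:funeq}), $\chi(s)f_{n}(1-s)=(-1)^{n}f_{n}(s)$, and reads off the orders at $s=0,-2,-4,\ldots$ from the already-established orders at $1-s=1,3,5,\ldots$ together with the simple zeros of $\chi(s)=2^{s}\pi^{s-1}\sin(\pi s/2)\Gamma(1-s)$ at the even non-positive integers; the drop from order $n+1$ at $1$ to order $n$ at $0$, and from order $n$ at $2k+1$ to order $n-1$ at $-2k$, comes entirely from that one factor of $\sin(\pi s/2)$. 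Your route instead computes locally and therefore needs the extra inputs $\zeta(0)\neq 0$, the simplicity of the trivial zeros, and $\zeta'(-2k)\neq 0$ -- all standard, and all in effect consequences of the same functional equation the paper uses, so nothing is circular -- but it costs you the two-coefficient bookkeeping at the trivial zeros that you correctly identify as the delicate step. What your version buys in exchange is explicit leading Laurent coefficients (e.g.\ $f_{1}(s)=1/(4s)+O(1)$ at $0$ and $f_{1}(-2k)=\zeta'(-2k)/2$), which the paper neither needs nor obtains. Both proofs are sound.
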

   \begin{proof}
   From (\ref{eq:fdef}) the case $k=1, 3, 5\cdots$ is proved in the same way as in Lemma 4.2. The case $k=0, -2, -4\cdots$ is proved from (\ref{eq:funeq}).
   \end{proof}
   \begin{lemma}
   Let $2m$ be a sufficiently large even integer. In the region $\{s|\sigma\geq2m\}\cup\{s|\sigma\leq1-2m\}$ zeros of $f_{n}(s)$ are located in $D_{1}$ and the number of those in a circle is $n$. 
   \end{lemma}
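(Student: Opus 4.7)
The plan is to handle the two half-planes separately. For $\sigma\geq 2m$ the approach is to imitate the proof of Lemma 4.3: the estimate (\ref{eq:f2}) combined with Lemma 4.4 yields the result by a direct application of the argument principle. For $\sigma\leq 1-2m$ we transfer the right half-plane result via the functional equation (\ref{eq:funeq}).

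In more detail, for $s\in D$ with $\sigma\geq 2m$ and $m$ large, the leading term $((\log|s|)/2)^n$ in (\ref{eq:f2}) dominates the error, so $\Re f_n(s)>0$; in particular $f_n$ has no zeros in $D\cap\{\sigma\geq 2m\}$. By Lemma 4.4 the only poles of $f_n$ in this region are the odd integers $\geq 2m+1$, each of order $n$. On the boundary of any of the corresponding small circles (which lies in $D$), (\ref{eq:f2}) forces the winding number of $f_n$ around $0$ to vanish, so the argument principle yields exactly $n$ zeros in each circle.

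For $\sigma\leq 1-2m$, we invoke $f_n(s)=(-1)^n\chi(s)f_n(1-s)$ from (\ref{eq:funeq}). If $s\in D$ then $1-s\in D$ with $\Re(1-s)\geq 2m$, so $f_n(1-s)\neq 0$ by the previous step; combined with $\chi$ being nonvanishing on $D$ (its zeros $0,-2,-4,\ldots$ all lie in $D_1$), this gives $f_n\neq 0$ on $D\cap\{\sigma\leq 1-2m\}$. For a $D_1$-circle around $-2k$ with $k\geq m$, Lemma 4.4 gives a pole of $f_n$ of order $n-1$, and the winding of $f_n$ on the boundary splits as the winding of $\chi$ plus the winding of $f_n(1-\cdot)$. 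The former equals $1$ (simple zero of $\chi$ at $-2k$); the latter equals $n-n=0$, the $n$ zeros coming from the previous paragraph applied to the small circle around $2k+1$ and the $n$ poles from $f_n$'s pole at $2k+1$. Hence the total winding is $1$, and the argument principle gives $(n-1)+1=n$ zeros in the circle.

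The main obstacle is the right half-plane argument-principle step: $m$ must be chosen large enough that on the boundary of every small circle around an odd integer $\geq 2m+1$, the error terms in (\ref{eq:f2}) are strictly dominated by the leading term, so that $f_n$ cannot wind around $0$. Once this is established, the left half-plane case is essentially bookkeeping through the functional equation, and the whole argument parallels Lemma 4.3.
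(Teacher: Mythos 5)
Your proof is correct and follows essentially the same route as the paper: positivity of $\Re f_{n}$ from (\ref{eq:f2}) plus the argument principle against the order-$n$ poles for $\sigma\geq 2m$, and transfer to $\sigma\leq 1-2m$ via the functional equation (\ref{eq:funeq}). Your explicit winding-number bookkeeping for the circles around the even non-positive integers just fills in details the paper leaves implicit.
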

   \begin{proof}
   From (\ref{eq:f2}) if $m$ is sufficiently large $\Re f_{n}(s)$ is positive in the region $\{s|\sigma\geq 2m\}\cap D$ hence the lemma is proved in the same way as in Lemma 4.1. From (\ref{eq:funeq}) the lemma is also proved in the region $\{s|\sigma\leq 1-2m\}\cap D$.
   \end{proof}
 
  From the previous lemmas we know the location of the zeros and poles of $g_{n}(s)$. For each circle $B$ included in $D_{1}$, let $N_{0}(B)$ (resp. $N_{\infty}(B)$) be the number of zeros (resp. poles) in $B$.
  \begin{lemma}
  Let $T$ and $m$ be large. In the region $\{s| 1-2m<\sigma<2m, |t|>T\}$ there exists no pole of $g_{n}(s)$. In the region $\{s|\sigma\geq2m\}$ zeros and poles of $g_{n}(s)$ are all located in $D_{1}$ and the number of zeros in a circle $B$ is at most $n$ and $N_{0}(B)=N_{\infty}(B)$. In the region $\{s|\sigma\leq 1-2m\}$ zeros and poles of $g_{n}(s)$ are located in $D_{1}$ and the number of zeros in a circle $B$ is at most $n+1$ and $N_{0}(B)=N_{\infty}(B)+1$.
  \end{lemma}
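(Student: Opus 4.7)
The plan is to reduce the claims about $g_{n}=f_{n}/h_{n}$ to the statements on $f_{n}$ and $h_{n}$ already proved in Lemmas 4.1--4.5, together with an explicit computation of the local order of $g_{n}$ at each integer center of the disks comprising $D_{1}$. From Lemmas 4.2 and 4.4, at $s=1$ one has $\mathrm{ord}(f_{n})=-(n+1)$ against $\mathrm{ord}(h_{n})=-n$, so $g_{n}$ has a simple pole; at each of $s=3,5,7,\ldots$ and at $s=0$, both $f_{n}$ and $h_{n}$ have poles of order $n$, so $g_{n}$ is regular there; at each of $s=-2,-4,\ldots$ one has $\mathrm{ord}(f_{n})=-(n-1)$ against $\mathrm{ord}(h_{n})=-n$, so $g_{n}$ has a forced simple zero. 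These are the only zeros and poles of $g_{n}$ contributed by the integer centers themselves.

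With this catalogue in hand, the first assertion is immediate: in the strip $\{1-2m<\sigma<2m,\ |t|>T\}$ the only possible poles of $g_{n}$ are the pole of $f_{n}$ at $s=1$ (which lies outside the strip when $|t|>T$) and the zeros of $h_{n}$, which by Lemma 4.3 do not lie in the strip.

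For the region $\{\sigma\geq 2m\}$, Lemmas 4.3 and 4.5 place every zero of $f_{n}$ and $h_{n}$ inside $D_{1}$, so every zero and pole of $g_{n}$ there also lies in $D_{1}$. For a disk $B\subset D_{1}$ centered at an odd integer $a\geq 2m+1$, $g_{n}$ is regular at $a$, so zeros of $g_{n}$ in $B$ can only come from the $n$ zeros of $f_{n}$ in $B$, possibly diminished by coincident zeros of $h_{n}$; hence $N_{0}(B)\leq n$. The equality $N_{0}(B)=N_{\infty}(B)$ follows from the argument principle applied to $g_{n}'/g_{n}=f_{n}'/f_{n}-h_{n}'/h_{n}$, which gives
\begin{equation*}
N_{0}(B)-N_{\infty}(B)=(n-n)-(n-n)=0.
\end{equation*}
The region $\{\sigma\leq 1-2m\}$ is treated analogously: every zero and pole of $g_{n}$ lies in $D_{1}$ by the same reasoning, and $B$ is now centered at some negative even integer $a\leq -2m$, where $g_{n}$ has a forced simple zero. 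Thus zeros of $g_{n}$ in $B$ consist of this simple zero together with zeros of $f_{n}$ in $B\setminus\{a\}$ of total multiplicity at most $n$, giving $N_{0}(B)\leq n+1$; the analogous argument-principle computation yields
\begin{equation*}
N_{0}(B)-N_{\infty}(B)=(n-(n-1))-(n-n)=1,
\end{equation*}
i.e.\ $N_{0}(B)=N_{\infty}(B)+1$.

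The main point to watch is the cancellation between coincident zeros of $f_{n}$ and $h_{n}$ inside $B$: this cancellation is absorbed automatically in the difference $N_{0}-N_{\infty}$ via the logarithmic derivative, but the upper bounds on $N_{0}(B)$ require one to observe separately that every zero of $g_{n}$ other than the forced one at $a$ arises from a zero of $f_{n}$ of at least the same order.
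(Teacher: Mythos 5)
Your proposal is correct and follows exactly the route the paper intends: the paper states this lemma without an explicit proof, presenting it as a direct consequence of Lemmas 4.2--4.5, and your computation of $\mathrm{ord}(g_n)=\mathrm{ord}(f_n)-\mathrm{ord}(h_n)$ at each integer center (simple pole at $1$, regular at $0,3,5,\ldots$, forced simple zero at $-2,-4,\ldots$) together with the argument-principle bookkeeping $N_0-N_\infty=(N_0(f_n)-N_\infty(f_n))-(N_0(h_n)-N_\infty(h_n))$ supplies precisely the omitted details. Your closing observation---that the upper bounds on $N_0(B)$ need the separate remark that any zero of $g_n$ away from the center inherits its multiplicity from a zero of $f_n$---is a correct and worthwhile point.
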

  Let $m=m(n)$ be a sufficiently large positive integer.
 We define $N_{g_{n}}(T)$ by the number of zeros of $g_{n}(s)$ with $-2m+1<\sigma<2m$ and $0<t<T$. From Lemma 4.3 and the proof of Theorem 1 and Theorem 2 in Matsumoto and Tanigawa \cite{Matsumoto} we have the following results.
   \begin{proposition}
   We have
   $$N_{g_{n}}(T)=\frac{T}{2\pi}\log\frac{T}{2\pi}-\frac{T}{2\pi}+O(\log T).$$
  \end{proposition}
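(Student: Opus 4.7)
The plan is to count zeros of $g_n$ via the argument principle, adapting the Matsumoto--Tanigawa proof of the analogous formula for $\eta_n(s)$. Fix a large $T_0>0$ and consider the rectangular contour $\mathcal{R}$ with vertices $2m-iT_0$, $2m+iT$, $-2m+1+iT$, $-2m+1-iT_0$. After a harmless perturbation of $T$ to ensure no zeros sit on the boundary, Lemma~4.6 shows that $g_n$ has no poles in $\mathcal{R}\cap\{|t|>T_0\}$ and only $O(1)$ zeros and poles in $\mathcal{R}\cap\{|t|\leq T_0\}$. Hence
$$N_{g_n}(T)=\frac{1}{2\pi}\Delta_{\mathcal{R}}\arg g_n(s)+O(1),$$
and the task reduces to evaluating the total variation of $\arg g_n$ along the four sides.

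On the right side $\sigma=2m$, combining (\ref{eq:f2}) with (\ref{eq:h1}) yields
$$g_n(2m+it)=1+O(2^{-2m})+O((\log t)^{-1}),$$
so for $m$ and $T_0$ large enough $g_n$ stays in a small disk about $1$ and the variation of the argument is $O(1)$. On the left side $\sigma=-2m+1$, one writes $g_n=f_n/h_n$ and uses the functional equation (\ref{eq:funeq}) together with the real-analytic symmetry $\overline{f_n(s)}=f_n(\bar{s})$ to obtain
$$\arg f_n(-2m+1+it)=n\pi+\arg\chi(-2m+1+it)-\arg f_n(2m+it),$$
while (\ref{eq:h1}) applied directly at $s=-2m+1+it\in D$ shows that $h_n$ is close to the positive real $(\log|s|/2)^n$, so $\arg h_n$ contributes only $O(1)$ along the left side. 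The $-\arg f_n(2m+it)$ term is $O(1)$ by the right-side analysis, so the entire left-side variation reduces to $\Delta\arg\chi(-2m+1+it)$ as $t$ traverses $[-T_0,T]$, which by the standard Stirling-type asymptotics for $\chi$ equals $2\theta(T)+O(1)=T\log(T/(2\pi))-T+O(1)$. After dividing by $2\pi$, this furnishes the main term $(T/(2\pi))\log(T/(2\pi))-T/(2\pi)$.

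The principal obstacle is to show that the variation of $\arg g_n$ on the horizontal top side $t=T$ is $O(\log T)$ (the bottom side at $t=-T_0$ is trivially $O(1)$). This is the analogue of the classical bound $S(T)=O(\log T)$ for the Riemann zeta-function, and I would treat it by a Backlund-type argument: the representation
$$g_n(s)=\zeta(s)+\sum_{k=1}^{n}\frac{a_{n,k}(s)}{h_n(s)}\zeta^{(k)}(s),$$
in which the coefficient of $\zeta(s)$ is exactly $1$, combined with (\ref{eq:a2}), (\ref{eq:h1}), and standard convexity bounds for $\zeta^{(k)}$, enables one to bound the number of sign changes of $\Re g_n(\sigma+iT)$ on $[-2m+1,2m]$ by applying Jensen's formula to $g_n$ on a disk centered near $2+iT$. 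As emphasized in Section~2, it is precisely the normalization that makes the leading coefficient unity which permits this transfer of the classical zeta-function method, and hence the Matsumoto--Tanigawa argument, to $g_n$.
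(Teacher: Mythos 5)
Your proposal is correct and follows essentially the same route as the paper: the paper establishes this proposition by citing the argument-principle computation of Matsumoto--Tanigawa (their Theorems 1 and 2, which give the Riemann--von Mangoldt count for $\eta_{n}$, hence for $f_{n}$) together with Lemma 4.3, which says $h_{n}$ has no zeros in the strip at large height, so the count transfers to $g_{n}=f_{n}/h_{n}$. Your version simply runs that same contour argument directly on $g_{n}$ --- main term from $\arg\chi$ on the left edge via the functional equation for $f_{n}$ (correctly not for $g_{n}$, cf.\ Remark 2.3), $O(1)$ on the right edge from $g_{n}\to 1$, and a Backlund/Jensen bound of $O(\log T)$ on the horizontal edge using the polynomial bound of Lemma 5.1 --- which is exactly the content of the cited proof.
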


\begin{proposition}
On the Riemann hypothesis if $T$ is large, zeros of $g_{n}(s)$ in the region  $\{s | 1-2m<\sigma<2m, |t|>T\}$ are on the critical line $\sigma=1/2$.
\end{proposition}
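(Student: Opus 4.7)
My plan is to identify the zeros of $g_n(s)$ in the region in question with the zeros of Matsumoto and Tanigawa's function $\eta_n(s)$, and then to invoke the analogous statement for $\eta_n$, whose proof is already contained in the arguments behind their Theorems 1 and 2.

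Set $\Omega=\{s\mid 1-2m<\sigma<2m,\ |t|>T\}$. By Lemma 4.3, for $T$ sufficiently large $h_n(s)$ has no zeros in $\Omega$, so the zeros of $g_n=f_n/h_n$ in $\Omega$ coincide (with multiplicity) with the zeros of $f_n$ in $\Omega$. Next, by Lemma 4.1 the poles of $\omega(s)$ lie on the real axis, while (\ref{eq:ome4}) gives $\omega(s)=-\log|s|+O(1)$, so after enlarging $T$ further if necessary $\omega(s)$ is holomorphic and non-vanishing in $\Omega$. Since $f_n(s)=-\omega(s)\eta_n(s)/2$, this identifies the zeros of $f_n$ in $\Omega$ with those of $\eta_n$. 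It therefore suffices to show that, under RH and for $T$ sufficiently large, every zero of $\eta_n(s)$ in $\Omega$ lies on the critical line.

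For this last step I would follow Matsumoto and Tanigawa: compare the total count of zeros of $\eta_n$ in the strip (whose $g_n$-analogue is Proposition 4.6) against the count of critical-line zeros supplied by the Riemann--Siegel type asymptotic for $Z^{(n)}(t)$, and apply Littlewood's lemma on a rectangular contour to deduce that the two counts agree up to errors inconsistent with any off-line zero at heights $|t|>T$ once $T$ is large. The only inputs required beyond the general setup are the functional equation (\ref{eq:funeq}) and the uniform bounds on $\omega^{(j)}$ from Lemma 3.2, both of which are in hand, so the reduction goes through verbatim. The main obstacle is the Littlewood-lemma boundary estimation, where one must control $\log|\eta_n|$ on a line $\sigma=c>1/2$ using RH-based bounds on $\zeta^{(k)}/\zeta$; once this is done, symmetry via (\ref{eq:funeq}) handles the half-strip $\sigma<1/2$ as well.
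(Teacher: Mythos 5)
Your reduction is correct and is essentially the argument the paper intends: Proposition 4.8 is stated with no written proof beyond the remark that it follows ``from Lemma 4.3 and the proof of Theorem 1 and Theorem 2 in Matsumoto and Tanigawa.'' Since $h_{n}$ and $f_{n}$ have poles only at real integers (Lemmas 4.2 and 4.4) and $h_{n}$ is zero-free in the strip for $|t|>T$ (Lemma 4.3), the zeros of $g_{n}=f_{n}/h_{n}$ there are exactly the zeros of $f_{n}$; and since $f_{n}(s)=-\omega(s)\eta_{n}(s)/2$ with $\omega$ holomorphic off the real axis and, by (\ref{eq:ome4}), non-vanishing for $|s|$ large in $D$, these coincide with the zeros of $\eta_{n}$. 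Citing the corresponding statement for $\eta_{n}$ from \cite{Matsumoto} then finishes the proof, and that is all the paper does.

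The one genuine problem is your sketch of how the $\eta_{n}$ statement itself would be established. A comparison of the total zero count of $\eta_{n}$ in the strip with the count of critical-line zeros can only be carried out with an $O(\log T)$ error term (this is precisely the content of Proposition 4.7 and of Matsumoto--Tanigawa's counting theorem), and an agreement of two counts up to $O(\log T)$ cannot exclude individual off-line zeros: it shows at best that all but $O(\log T)$ of the zeros up to height $T$ lie on the line, which is strictly weaker than the assertion that \emph{every} zero with $|t|>T$ does. So the Littlewood-lemma route, as described, would fail to prove the proposition. The actual mechanism (in Anderson's $n=1$ case and in Matsumoto--Tanigawa) is a direct non-vanishing argument: under RH one writes $\eta_{n}(s)/\zeta(s)$ in terms of the quotients $\zeta^{(k)}(s)/\zeta(s)$, expands $\zeta'/\zeta$ by its partial-fraction formula over the nontrivial zeros, and uses the fact that $\Re\frac{1}{s-\rho}=\frac{\sigma-1/2}{|s-\rho|^{2}}$ has a fixed sign for $\sigma\neq 1/2$ to conclude that $\eta_{n}(s)$ cannot vanish off the line once $t$ is large. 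If you intend merely to quote Matsumoto--Tanigawa's theorem as a black box, your proof is complete; if you intend to reprove it, the counting argument must be replaced by this positivity argument.
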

\begin{section}
{LEMMAS FOR THE PROOF OF THE THEOREM}
\end{section}
\begin{lemma}
  Let $T$ be large. There exists a positive number $A$ such that in the region $\{s| -2m+1<\sigma<2m, t>T\}$
  $$g_{n}(s)=O(t^{A}).$$
  \end{lemma}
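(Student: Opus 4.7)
The plan is to estimate $g_n(s) = f_n(s)/h_n(s)$ by bounding numerator and denominator separately on the region $R := \{s : -2m+1 < \sigma < 2m,\ t > T\}$. For $T$ sufficiently large every $s \in R$ lies in $D$ (the excluded discs are centred on the real axis), and $|s| \asymp t$, so the estimates of Section 3 apply with $\log|s| = \log t + O(1)$.

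For the denominator, \eqref{eq:h1} yields
$$h_n(s) = \Bigl(\frac{\log|s|}{2}\Bigr)^{n} + O\bigl((\log|s|)^{n-1}\bigr),$$
so $|h_n(s)| \gg (\log t)^{n}$ uniformly on $R$ once $T$ is large enough; this also reconfirms the non-vanishing half of Lemma 4.3. For the numerator I would start from \eqref{eq:f1}, i.e.\ $f_n(s) = \sum_{k=0}^{n} a_{n,k}(s)\zeta^{(k)}(s)$. The coefficient bounds \eqref{eq:a2} and \eqref{eq:h1} give $a_{n,k}(s) = O((\log t)^{n})$ for every $k$, so everything reduces to a polynomial bound on $\zeta^{(k)}(s)$ in the strip.

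The $\zeta$-bound is classical: Lemma 3.3 gives $\zeta(s) = O(1)$ for $\sigma \ge 2$, while the functional equation combined with Stirling's formula yields $|\chi(\sigma+it)| \ll t^{1/2-\sigma}$, hence $\zeta(\sigma+it) = O(t^{2m-1/2})$ for $\sigma \le 1-2m$; the Phragm\'en--Lindel\"of convexity principle interpolates to a uniform polynomial bound $\zeta(s) = O(t^{A_0})$ throughout $R$ for some $A_0 = A_0(m)$. The derivatives are then handled by Cauchy's integral formula on a disc of radius $1$ about $s$, which stays in a slightly wider strip and (since $t > T$ is large) far from the pole at $s = 1$, giving $\zeta^{(k)}(s) = O(t^{A_0})$ for $0 \le k \le n$.

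Combining the three ingredients, $|f_n(s)| \ll (\log t)^{n}\, t^{A_0}$, and dividing by the lower bound for $|h_n(s)|$ gives $|g_n(s)| \ll t^{A_0}$, so the lemma holds with any $A$ slightly larger than $A_0$. There is no genuine obstacle: once the standard convexity bound for $\zeta$ in a bounded vertical strip is invoked, the rest is bookkeeping with the estimates already compiled in Section 3.
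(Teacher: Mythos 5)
Your proof is correct, and its skeleton coincides with the paper's: bound the coefficients $a_{n,k}(s)$ and $h_{n}(s)$ logarithmically via Lemma 3.2, (\ref{eq:a2}) and (\ref{eq:h1}), reduce everything to a polynomial bound for $\zeta^{(k)}(s)$ in the strip, and divide. The only genuine difference is the mechanism for that central $\zeta$-bound. The paper quotes the Euler--Maclaurin expansion of $\zeta(s)$ (Edwards, p.~114), each term of which is visibly $O(|s|^{A})$ in the half-plane $\sigma>1-2K$, and which can be differentiated term by term to handle $\zeta^{(k)}$; you instead use $\zeta(s)=O(1)$ for $\sigma\geq 2$, the functional equation with $|\chi(\sigma+it)|\ll t^{1/2-\sigma}$ for $\sigma\leq 1-2m$, Phragm\'en--Lindel\"of to interpolate, and Cauchy's formula on a unit disc for the derivatives. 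Both are classical routes to the same polynomial bound; yours leans on convexity (and implicitly on Stirling), while the paper's is self-contained given the quoted formula and is arguably more direct for the derivatives, since the paper's one-line ``hence with Lemma 3.2 the result follows'' silently differentiates the Euler--Maclaurin expansion, a step you make explicit via Cauchy. One small point of care in your version: apply Phragm\'en--Lindel\"of to $(s-1)\zeta(s)$ or restrict to $t>T$ from the outset so the pole at $s=1$ causes no trouble, and take the wider strip before invoking Cauchy so the radius-one discs stay inside the region where the convexity bound holds; you note both, so nothing is missing.
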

  \begin{proof}
  Let $M\geq 2$ and $K$ be positive integers. It is known that 
  \begin{align*}
  \zeta(s)&=\sum_{n=1}^{M}\frac{1}{n^{s}}+\frac{M^{1-s}}{s-1}-\frac{M^{-s}}{2}+\sum_{k=1}^{K}\frac{B_{2k}}{(2k)!}s(s+1)\cdots (s+2k-2)M^{1-s-2k}\\
  &-\frac{s(s+1)\cdots(s+2k)}{(2K+1)!}\int_{M}^{\infty}B_{2K+1}(x-[x])x^{-s-2K-1}dx,
  \end{align*}
  where $B_{n}$ is the $n$-th Bernoulli number and $B_{n}(x)$ is the $n$-th Bernoulli polynomial (see Edwards [3, p.114]). Hence with Lemma 3.2 the result follows. 
  \end{proof}
\begin{lemma}
For $s\in D\cap\{s|\sigma>2m\}$ we have
 $$\frac{g'_{n}}{g_{n}}(\sigma+it)=O(1).$$
\end{lemma}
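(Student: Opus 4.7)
Since $g_n = f_n/h_n$, logarithmic differentiation yields $g_n'/g_n = f_n'/f_n - h_n'/h_n$, so it suffices to show that both $f_n'/f_n$ and $h_n'/h_n$ are $O(1)$ in $D\cap\{\sigma>2m\}$. My plan is to extract the derivatives from the defining recursions (\ref{eq:fdef}) and (\ref{eq:hdef}), writing $h_n'(s) = h_{n+1}(s) + \tfrac{1}{2}\omega(s)h_n(s)$ and $f_n'(s) = f_{n+1}(s) + \tfrac{1}{2}\omega(s)f_n(s)$, and then to insert the asymptotic expansions already developed: (\ref{eq:h1}) for $h_n$ and $h_{n+1}$, (\ref{eq:f2}) for $f_n$ and $f_{n+1}$, and (\ref{eq:ome4}) for $\omega$.

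Inserting (\ref{eq:h1}) gives $h_{n+1}(s) = (\log|s|/2)^{n+1} + O((\log|s|)^n)$ and $\tfrac{1}{2}\omega(s)h_n(s) = -(\log|s|/2)^{n+1} + O((\log|s|)^n)$, so the two leading terms of order $(\log|s|)^{n+1}$ cancel exactly and one obtains $h_n'(s) = O((\log|s|)^n)$. Combined with the fact that $h_n(s) \asymp (\log|s|)^n$ in $D\cap\{\sigma>2m\}$ for $m$ large (again by (\ref{eq:h1}), whose main term dominates the $O((\log|s|)^{n-1})$ error), this yields $h_n'/h_n = O(1)$. Running the same cancellation for $f_n$ using (\ref{eq:f2}) in place of (\ref{eq:h1}) shows $f_n'(s) = O((\log|s|)^n)$ and $f_n(s) \asymp (\log|s|)^n$, hence $f_n'/f_n = O(1)$.

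The only point that requires attention is whether the $(1 + O(2^{-\sigma}))$ correction appearing in (\ref{eq:f2}) could spoil either the cancellation of the leading $(\log|s|)^{n+1}$ terms or the nonvanishing of $f_n(s)$; but since $\sigma > 2m$ is large, this factor is $1 + o(1)$ and its contribution is absorbed into the $O((\log|s|)^n)$ remainder. No substantive obstacle arises: the lemma is a direct consequence of the Section 3 asymptotics together with the recursive definitions of $f_n$ and $h_n$.
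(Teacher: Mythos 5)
Your treatment of $h_n'/h_n$ is sound: in $h_n'=h_{n+1}+\tfrac12\omega h_n$ the two main terms $(\log|s|/2)^{n+1}$ and $-(\log|s|/2)^{n+1}$ supplied by (\ref{eq:h1}) and (\ref{eq:ome4}) genuinely cancel, leaving $h_n'=O((\log|s|)^n)$, and since $h_n\asymp(\log|s|)^n$ this gives $h_n'/h_n=O(1)$. The gap is in the $f_n$ half. The expansion (\ref{eq:f2}) has the form $(1+O(2^{-\sigma}))(\log|s|/2)^j+O((\log|s|)^{j-1})$, and the $O(2^{-\sigma})$ factors attached to $f_{n+1}$ and to $\tfrac12\omega f_n$ are a priori unrelated functions; after cancelling the bare powers you are left with an error of size $O\bigl(2^{-\sigma}(\log|s|)^{n+1}\bigr)$, not $O((\log|s|)^n)$. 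You flag exactly this point and dismiss it on the grounds that the factor is $1+o(1)$, but that is not enough: on a fixed vertical line $\sigma=c>2m$ the quantity $2^{-\sigma}\log|s|\asymp 2^{-c}\log|t|$ is unbounded as $|t|\to\infty$, so your argument only yields $f_n'/f_n=O(2^{-\sigma}\log|s|)+O(1)$, which is not $O(1)$.

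To close the gap you must see that the $O(2^{-\sigma})$ corrections combine coherently: writing $f_j=h_j\zeta+\sum_{k\geq1}a_{j,k}\zeta^{(k)}$, the $\zeta$-coefficients of $f_{n+1}+\tfrac12\omega f_n$ recombine into $h_n'\zeta=O((\log|s|)^n)$, while every remaining term carries a factor $\zeta^{(k)}=O(2^{-\sigma})$ with $k\geq1$ and a coefficient of size $O((\log|s|)^{n})$ at worst. This is in substance what the paper does, though arranged differently: it works with $g_n=\zeta+\sum_{k\geq1}(a_{n,k}/h_n)\zeta^{(k)}$ directly, shows $g_n=1+O(2^{-\sigma})$ from Lemma 3.3, and differentiates termwise using $a_{n,k}^{(l)}=O((\log|s|)^{n-1})$ (a bound you never establish but would also need) to get $g_n'=O(2^{-\sigma})$, hence the stronger conclusion $g_n'/g_n=O(2^{-\sigma})$.
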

\begin{proof}
   From (\ref{eq:f1}), (\ref{eq:a1}), (\ref{eq:a2}), (\ref{eq:h1}) and Lemma 3.3 for the region  $\{s|\sigma\geq 2m\}\cap D$ we have
 \begin{align}
 g_{n}(s)&=\zeta(s)+\sum_{k=1}^{n}\frac{a_{n,k}(s)}{h_{n}(s)}\zeta^{(k)}(s)\label{eq:g2}\\
 &=1+O(2^{-\sigma})+\sum_{k=1}^{n}\frac{O((\log|s|)^{n-1}2^{-\sigma})}{\left(\frac{\log|s|}{2}\right)^{n}+O((\log|s|)^{n-1})}\notag \\ 
 &=1+O(2^{-\sigma}).\label{eq:g1}
 \end{align}
Differentiating (\ref{eq:g2}) we have
\begin{equation}
g'_{n}(s)=\zeta'(s)+\sum_{k=1}^{n}\left(\left(\frac{a_{n,k}'(s)}{h_{n}(s)}-\frac{a_{n,k}(s)h_{n}'(s)}{h_{n}(s)^{2}}\right)\zeta^{(k)}(s)+\frac{a_{n,k}(s)}{h_{n}(s)}\zeta^{(k+1)}(s)\right).\label{eq:9}
\end{equation}
From (\ref{eq:a1}), (\ref{eq:ome4}), (\ref{eq:ome5}), if $k\neq 0$ we have $a_{n,k}^{(l)}(s)=O((\log |s|)^{n-1})$ for any positive integer $l$. Hence with (\ref{eq:a2}), (\ref{eq:h1}), (\ref{eq:9}) and Lemma 3.3 we have
\begin{align*}
g'_{n}(s)&=O(2^{-\sigma})+\frac{O(2^{-\sigma}(\log |s|)^{n-1})}{(\log|s|)^{n}+O((\log |s|)^{n-1})}+\frac{O(2^{-\sigma}(\log|s|)^{2n-2})}{(\log |s|)^{2n}+O((\log |s|)^{2n-1})}\\
&=O(2^{-\sigma}).
\end{align*}
Hence with (\ref{eq:g1}) we have
$$\frac{g'_{n}}{g_{n}}(\sigma+it)=\frac{O(2^{-\sigma})}{1+O(2^{-\sigma})}=O(1).$$
\end{proof}
\begin{lemma}
We have
$$N_{g_{n}}(T+1)-N_{g_{n}}(T)=O(\log T).$$
\end{lemma}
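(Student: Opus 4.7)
The plan is to deduce the estimate from Jensen's formula applied to a disk that contains the rectangle $B = \{-2m+1 < \sigma < 2m,\ T < t < T+1\}$. I would take $s_{0} = (2m+1) + i(T + 1/2)$ as center and fix constants $4m < r < R$ (depending only on $n$) so that $B$ lies comfortably inside $\{|s - s_{0}| \leq r\}$ and the outer disk $\{|s - s_{0}| \leq R\}$ has definite additional thickness. Because the whole outer disk sits at height comparable to $T$, while by Lemmas 4.3, 4.4 and 4.5 the poles of $g_{n}$ are confined to $s=1$, to isolated real integers, and to the small circles of $D_{1}$ about real integers, the function $g_{n}$ will be holomorphic on the closed outer disk once $T$ is sufficiently large.

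For the value at the center, the computation displayed in the proof of Lemma 5.2 already yields $g_{n}(s_{0}) = 1 + O(2^{-(2m+1)})$, so $|g_{n}(s_{0})| \geq 1/2$ for $m$ fixed large. On the outer circle I would need a polynomial bound $|g_{n}(s)| = O(T^{C})$ with $C = C(n)$. In the portion with $-2m+1 < \sigma < 2m$ this is exactly Lemma 5.1; in the portion with $\sigma \geq 2m$ equation (\ref{eq:g1}) gives $|g_{n}(s)| \leq 2$; and on the thin sliver with $\sigma \leq -2m+1$ I would appeal to the functional equation (\ref{eq:funeq}) to rewrite $g_{n}(s) = (-1)^{n}\chi(s)f_{n}(1-s)/h_{n}(s)$, then combine a standard Stirling-type bound $|\chi(s)| = O(t^{1/2-\sigma})$ with the upper bound (\ref{eq:f2}) for $f_{n}(1-s)$ (applicable since $\Re(1-s) \geq 2m$ there) and the lower bound $|h_{n}(s)| \gg (\log t)^{n}$ coming from (\ref{eq:h1}).

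With these ingredients Jensen's formula will produce
$$n(r)\log(R/r) \;\leq\; \log \max_{|s - s_{0}| = R}|g_{n}(s)| \;-\; \log|g_{n}(s_{0})| \;=\; O(\log T),$$
where $n(r)$ counts zeros of $g_{n}$ in $|s - s_{0}| \leq r$. Since $B$ lies inside that disk, this gives $N_{g_{n}}(T+1) - N_{g_{n}}(T) \leq n(r) = O(\log T)$. The main obstacle I anticipate is verifying the polynomial upper bound uniformly across the whole outer disk; in particular, on the small sliver poking past $\sigma = -2m+1$, where Lemma 5.1 no longer applies, one must invoke the functional equation. Everything else is essentially a geometric bookkeeping exercise.
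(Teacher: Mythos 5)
Your argument is correct, but it is a genuinely different (and considerably heavier) route than the one the paper takes. The paper simply differences the asymptotic formula of Proposition 4.7: writing $N_{g_{n}}(T+1)-N_{g_{n}}(T)=\frac{T+1}{2\pi}\log\frac{T+1}{2\pi}-\frac{T+1}{2\pi}-\frac{T}{2\pi}\log\frac{T}{2\pi}+\frac{T}{2\pi}+O(\log T)$, the main terms cancel to $O(1)$ and the two $O(\log T)$ error terms give the claim in one line. Your Jensen-disk argument is the classical Landau--Titchmarsh approach and is self-contained modulo Lemma 5.1 and the estimates of Sections 3--4: it does not use Proposition 4.7 at all, which is a real advantage since that proposition is imported from the Matsumoto--Tanigawa machinery. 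The ingredients you list all check out: holomorphy of $g_{n}$ on the disk for large $T$ (its possible poles lie near the real axis or at bounded height), the lower bound $|g_{n}(s_{0})|\geq 1/2$ from (\ref{eq:g1}), the polynomial bound in the strip from Lemma 5.1, and on the left sliver the functional equation $g_{n}(s)=(-1)^{n}\chi(s)f_{n}(1-s)/h_{n}(s)$ combined with $|\chi(s)|\ll t^{1/2-\sigma}$, (\ref{eq:f2}) and (\ref{eq:h1}) -- the last step being exactly the extra work your approach costs and the paper's avoids. (One cosmetic point: $r>4m$ is not quite enough to contain the rectangle, whose far corner sits at distance $\sqrt{16m^{2}+1/4}>4m$ from your center; take $r=4m+1$, say.) Note also that your disk estimate is essentially the same input that the paper's Lemma 5.4 (Titchmarsh's Lemma $\alpha$) packages for use in Lemma 5.5, so your route would let one organize Lemmas 5.3--5.5 around a single maximum-modulus bound rather than around Proposition 4.7.
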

\begin{proof}
From Proposition 4.7 we have
\begin{align*}
N_{g_{n}}(T+1)-N_{g_{n}}(T)&=\Bigl(\frac{T+1}{2\pi}\Bigl)\Bigl(\log\frac{T}{2\pi}+O\Bigl(\frac{1}{T}\Bigl)\Bigl)-\frac{T}{2\pi}\log\frac{T}{2\pi}+O(\log T)\\
&=O(\log T).
\end{align*}
\end{proof}
\begin{lemma}(Titchmarsh [9, p.56] LEMMA $\alpha$)
If $f(s)$ is regular, and
$$\left|\frac{f(s)}{f(s_{0})}\right|<e^{M}\hspace{10mm}(M>1)$$
in the circle $|s-s_{0}|\leq r$, then
$$\left|\frac{f'(s)}{f(s)}-\sum_{\rho}\frac{1}{s-\rho}\right|<\frac{AM}{r}\hspace{10mm}(|s-s_{0}|\leq\frac{1}{4}r),$$
where $\rho$ runs through the zeros of $f(s)$ such that $|\rho-s_{0}|\leq \frac{1}{2}r$ and $A$ is a positive constant.
\end{lemma}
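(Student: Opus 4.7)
The plan is to combine a Blaschke-type product with the Borel--Carath\'eodory theorem. First I would use Jensen's inequality: applied to $f(s)/f(s_{0})$ on the disk $|s-s_{0}|\leq r$, the hypothesis $|f(s)/f(s_{0})|<e^{M}$ implies that the total number $K$ of zeros of $f$ in the closed disk $|s-s_{0}|\leq r$ (in particular the zeros appearing in the sum of the statement) satisfies $K\log 2\leq M$, so $K=O(M)$.

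Next I would introduce the auxiliary function
$$g(s)=f(s)\prod_{k=1}^{K}\frac{r^{2}-(s-s_{0})\overline{(\rho_{k}-s_{0})}}{r(s-\rho_{k})},$$
where $\rho_{1},\dots,\rho_{K}$ are all the zeros of $f$ in $|s-s_{0}|\leq r$. Each factor has modulus $1$ on the circle $|s-s_{0}|=r$, so $|g|=|f|$ there, and $g$ is holomorphic and non-vanishing on the open disk $|s-s_{0}|<r$; hence $\log g$ admits a single-valued analytic branch. The maximum modulus principle gives $|g(s)|\leq e^{M}|f(s_{0})|$ throughout, while at $s_{0}$ the product equals $\prod_{k}(r/|\rho_{k}-s_{0}|)\geq 1$, so $|g(s_{0})|\geq|f(s_{0})|$. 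Combining these two inequalities, $\Re(\log g(s)-\log g(s_{0}))\leq M$ on the whole disk, and the Borel--Carath\'eodory theorem with radii $r$ and $r/2$ yields $|\log g(s)-\log g(s_{0})|\leq 2M$ for $|s-s_{0}|\leq r/2$. A Cauchy estimate on the smaller disk $|s-s_{0}|\leq r/4$ then gives $|g'/g(s)|=O(M/r)$.

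To conclude, I would compute the logarithmic derivative of the product defining $g$, obtaining
$$\frac{f'}{f}(s)-\sum_{k=1}^{K}\frac{1}{s-\rho_{k}}=\frac{g'}{g}(s)-\sum_{k=1}^{K}\frac{\overline{(\rho_{k}-s_{0})}}{r^{2}-(s-s_{0})\overline{(\rho_{k}-s_{0})}},$$
and then transfer to the right-hand side the terms $1/(s-\rho_{k})$ corresponding to $\rho_{k}$ with $r/2<|\rho_{k}-s_{0}|\leq r$. On the disk $|s-s_{0}|\leq r/4$ each such transferred term is $O(1/r)$ because $|s-\rho_{k}|\geq r/4$, and each term of the Blaschke sum is also $O(1/r)$ since its denominator is at least $3r^{2}/4$ in absolute value. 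Since there are $K=O(M)$ terms, both sums contribute $O(M/r)$, and combined with $|g'/g|=O(M/r)$ the total is the desired $AM/r$. The one slightly delicate bookkeeping point is the treatment of the zeros in the annulus $r/2<|\rho-s_{0}|\leq r$, which must appear in the construction of $g$ but not in the final sum; once this is accounted for the rest of the argument is entirely routine complex analysis.
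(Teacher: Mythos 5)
The paper offers no proof of this lemma at all --- it is quoted verbatim from Titchmarsh [9, \S 3.9] --- so your proposal is really being measured against the standard textbook argument. Your overall strategy (divide out the zeros, apply Borel--Carath\'eodory to the logarithm of the resulting non-vanishing function, finish with a Cauchy estimate and a term count) is the right one, but the very first step contains a genuine gap. Jensen's formula on the disk $|s-s_{0}|\leq r$ gives $\int_{0}^{r}n(t)t^{-1}\,dt<M$, where $n(t)$ is the number of zeros in $|s-s_{0}|\leq t$; this yields $n(\lambda r)\log(1/\lambda)<M$ for each fixed $\lambda<1$, i.e.\ it bounds the number of zeros in any \emph{strictly smaller} concentric disk, but it gives no bound at all on the number $K$ of zeros in the full closed disk $|s-s_{0}|\leq r$. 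Concretely, $f(s)=1-((s-s_{0})/r)^{J}$ satisfies $|f(s)/f(s_{0})|\leq 2<e^{M}$ on the disk for any $M>1$, yet has $J$ zeros on the boundary circle with $J$ arbitrary. Since your final step estimates both the Blaschke-correction sum and the transferred terms by (number of terms) times $O(1/r)$ and then invokes $K=O(M)$, the conclusion $O(M/r)$ does not follow; in the example the relevant sums are indeed small, but only through cancellations that a term-by-term absolute-value bound cannot see. (Two smaller blemishes: for a zero on the circle $|\rho-s_{0}|=r$ the Blaschke factor degenerates to a unimodular constant, so $g$ still vanishes there and $\log g$ lives only on the open disk, which means Borel--Carath\'eodory must be applied with outer radius strictly less than $r$; and the sign in your logarithmic-derivative identity is off, though harmlessly so.)

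The repair is routine: run your entire construction on the disk of radius $R=3r/4$ rather than $r$. Jensen then gives $n(R)\leq M/\log(4/3)=O(M)$, the Blaschke product relative to radius $R$ is non-vanishing on $|s-s_{0}|<R$, Borel--Carath\'eodory applies with radii $R$ and $3r/8$, and all your per-term bounds survive on $|s-s_{0}|\leq r/4$: the transferred zeros with $r/2<|\rho-s_{0}|\leq R$ still satisfy $|s-\rho|\geq r/4$, and the Blaschke denominators are still $\geq R(R-r/4)\gg r^{2}$. It is worth noting that Titchmarsh's own proof avoids the issue entirely and is shorter: he divides by the plain product $\prod(s-\rho)$ taken only over the zeros with $|\rho-s_{0}|\leq r/2$, observes that on $|s-s_{0}|=r$ one has $|s-\rho|\geq r/2\geq|\rho-s_{0}|$ so that $|g(s)/g(s_{0})|<e^{M}$ there and hence throughout by the maximum principle, and then applies Borel--Carath\'eodory on the half-radius disk, where $g$ is zero-free; no count of zeros and no Blaschke correction sum are needed.
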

\begin{lemma}
For large $t$ and $1-2m\leq\sigma \leq 2m$ we have
$$\frac{g'_{n}}{g_{n}}(\sigma+it)=\sum_{|t-\gamma|<1}\frac{1}{s-\rho}+O(\log t),$$
where $\rho=\beta+i\gamma$ runs through the zeros of $g_{n}(s)$ such that $|t-\gamma|<1$.
\end{lemma}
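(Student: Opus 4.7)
The plan is to mirror the classical derivation of the analogous estimate for $\zeta'/\zeta$, combining Titchmarsh's Lemma $\alpha$ (Lemma 5.4) with the polynomial growth bound of Lemma 5.1, the normalization $g_{n}(s)=1+O(2^{-\sigma})$ from (5.3), and the density estimate of Lemma 5.3. The geometry is as follows: for fixed (large) $m$ set $s_{0}=2m+it$ and $r=16m$, so that the closed disk $|s-s_{0}|\le r/4$ contains the entire horizontal segment $\{\sigma+it:1-2m\le\sigma\le 2m\}$ on which the estimate is required. By Lemma 4.6, applied with a sufficiently large enlargement of the parameter $m$ (depending only on the present $m$), $g_{n}(s)$ is holomorphic throughout the larger disk $|s-s_{0}|\le r$ once $t$ is large enough, since the only poles of $g_{n}$ with large imaginary part lie in the small exceptional neighborhoods $D_{1}$ of the integer points.

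To verify the hypothesis of Lemma 5.4, I would note that (5.3) gives $|g_{n}(s_{0})|=1+O(2^{-2m})$, which is bounded away from $0$, while Lemma 5.1 provides $|g_{n}(s)|\ll t^{A}$ uniformly on the disk (the vertical excursion $|t'-t|\le r$ merely adjusts the constant $A$). Consequently
$$\left|\frac{g_{n}(s)}{g_{n}(s_{0})}\right|\le e^{M}\qquad\text{with } M=A\log t+O(1),$$
and Lemma 5.4 yields, for every $s$ with $|s-s_{0}|\le r/4$,
$$\frac{g_{n}'(s)}{g_{n}(s)}=\sum_{|\rho-s_{0}|\le r/2}\frac{1}{s-\rho}+O\!\left(\frac{M}{r}\right)=\sum_{|\rho-s_{0}|\le r/2}\frac{1}{s-\rho}+O(\log t),$$
where $\rho$ ranges over the zeros of $g_{n}$ in the half-disk.

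It remains to replace the condition $|\rho-s_{0}|\le r/2$ by $|t-\gamma|<1$, i.e.\ to absorb the zeros with $1\le|t-\gamma|\le r/2$ into the error term. For such a zero $\rho=\beta+i\gamma$ one has $|s-\rho|\ge|t-\gamma|$, so $|1/(s-\rho)|\le 1/|t-\gamma|$. Splitting these zeros into the annular shells $k\le|t-\gamma|<k+1$ for $k=1,\dots,\lceil r/2\rceil$, Lemma 5.3 applied at heights $t\pm k$ shows that each shell contains $O(\log t)$ zeros (since $k$ is bounded, $\log(t\pm k)=O(\log t)$), so the total contribution is
$$\sum_{k=1}^{\lceil r/2\rceil}\frac{O(\log t)}{k}=O(\log t),$$
which completes the argument.

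The main technical nuisance is not any single deep step but rather calibrating $s_{0}$ and $r$ correctly: one needs $s_{0}$ far enough to the right that $g_{n}(s_{0})$ is essentially $1$, but $r$ large enough that the disk of radius $r/4$ covers the entire strip $1-2m\le\sigma\le 2m$, while simultaneously keeping the full disk $|s-s_{0}|\le r$ free of poles of $g_{n}$. Once this set-up is in place the growth bound Lemma 5.1 and the density bound Lemma 5.3 carry the rest of the proof in a routine manner.
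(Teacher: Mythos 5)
Your argument is correct and follows essentially the same route as the paper: apply Titchmarsh's Lemma $\alpha$ centred at the right edge of the strip at height $t$ with radius of order $m$, bound $M$ via Lemma 5.1 (with an enlarged $m$) and $|g_{n}(s_{0})|\gg 1$ via $g_{n}=1+O(2^{-\sigma})$, then discard the zeros with $|t-\gamma|\ge 1$ using Lemma 5.3. The only differences are cosmetic (the paper takes $s_{0}=2m+1+iT$, $r=16m+4$, and absorbs the far zeros in one step rather than by dyadic-type shells); note only that the poles of $g_{n}$ at large height are excluded because $D_{1}$ hugs the real axis, not because they "lie in $D_{1}$ with large imaginary part."
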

\begin{proof}
Let $T$ be sufficiently large and $f(s)=g_{n}(s)$, $s_{0}=2m+1+iT$, $r=16m+4$ in the previous lemma. If $|s-s_{0}|\leq 16m+4$ then $-14m-3\leq \sigma \leq 18m+5$, $T-16m-4\leq t \leq T+16m+4$. There exists a constant $A$ such that $g_{n}(\sigma+it)=O(t^{A})$ uniformly in the region $\{s| -14m-3\leq \sigma \leq 18m+5, t>T-16m-4\}$ by Lemma 5.1 and we have $g_{n}(2m+1+it)=1+O(4^{-m})$ by (\ref{eq:g1}). Hence with the previous lemma in the region $|s-s_{0}|\leq 4m+1$ we have
$$\frac{g'_{n}}{g_{n}}(\sigma+it)=\sum_{\rho}\frac{1}{s-\rho}+O(\log T),$$
where the summation is over the zeros of $g_{n}(s)$ such that $|\rho-s_{0}|\leq 8m+2$. In particular if $t=T$ we have
$$\frac{g'_{n}}{g_{n}}(\sigma+it)=\sum_{\rho}\frac{1}{s-\rho}+O(\log t).$$
If $|\rho-s_{0}|\leq 8m+2$ but $|\gamma-t|\geq 1$ then $|s-\rho|\geq 1$ hence with Lemma 5.3 we have
$$\frac{g'_{n}}{g_{n}}(\sigma+it)=\sum_{|t-\gamma|<1}\frac{1}{s-\rho}+O(\log t),$$
uniformly for $|\sigma-2m-1|\leq 4m+1$.
\end{proof}
\begin{lemma}
There exists a sequence $\{T_{j}\}$ tending to infinity, such that  if $g_{n}(\beta+i\gamma)=0$ then $|\gamma-T_{j}|^{-1}=O(\log T_{j}).$
\end{lemma}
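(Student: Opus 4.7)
The strategy is a standard pigeonhole construction. For each sufficiently large integer $j$ I will choose $T_j \in [j, j+1]$ that stays at distance at least $c/\log j$ from every ordinate of a zero of $g_n(s)$, where $c>0$ is an absolute constant; this automatically gives $T_j \to \infty$.

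First, any zero $\rho = \beta + i\gamma$ with $|\gamma - T_j| \geq 1$ trivially satisfies $|\gamma - T_j|^{-1} \leq 1 = O(\log T_j)$, so only zeros whose ordinates lie in the window $[j-1,\,j+2]$ need be avoided. Applying Lemma 5.3 to each of the three unit intervals $[j-1,j]$, $[j,j+1]$, $[j+1,j+2]$, the number of such relevant zeros is at most $C\log j$ for some constant $C>0$ and all large $j$.

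Now partition $[j,j+1]$ into $N = \lfloor 4C\log j\rfloor$ equal subintervals of length $1/N$. Since fewer than $C\log j \leq N/4$ of them can contain an ordinate $\gamma$ of a relevant zero, at least one subinterval is free of such ordinates; let $T_j$ be its midpoint. Then for every zero $\rho$ of $g_n$ with $|\gamma - T_j| < 1$ we have $|\gamma - T_j| \geq 1/(2N) \gg 1/\log T_j$, so $|\gamma - T_j|^{-1} = O(\log T_j)$, as required.

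The only nontrivial input is the zero-counting estimate provided by Lemma 5.3, so there is no real obstacle beyond organising the pigeonhole cleanly; this is essentially the Titchmarsh-style device for picking heights that avoid ordinates of zeros.
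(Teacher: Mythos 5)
Your proof is correct and follows essentially the same pigeonhole argument as the paper: both subdivide $[j,j+1]$ into $O(\log j)$ subintervals, invoke Lemma 5.3 to bound the number of ordinates of zeros of $g_{n}(s)$ near that height, and take $T_{j}$ in a subinterval free of ordinates. Your write-up is slightly more explicit about disposing of zeros with $|\gamma-T_{j}|\geq 1$ and of ordinates just outside $[j,j+1]$, but the substance is identical to the paper's proof.
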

\begin{proof}
Let $j$ be a large positive integer. Suppose the rectangle defined by $-2m+1\leq\sigma\leq2m$, $j\leq t\leq j+1$ contains $N$ zeros of $g_{n}(s)$. Divide it to $N+1$ rectangles of width $1/N+1$. At least one of these contains no zero of $g_{n}(s)$. There is  a $T_{j}$ with $j<T_{j}<j+1$ such that $|\gamma-T_{j}|>1/2(N+1)$. From  Lemma 5.3 we have $|\gamma-T_{j}|^{-1}=O(N)=O(\log T_{j})$.
\end{proof}
\begin{lemma}
There exists a sequence $\{T_{j}\}$ tending to infinity, such that 
$$\frac{g_{n}'}{g_{n}}(\sigma+iT_{j})=O(\log^{2} T_{j}),$$
uniformly for $-2m+1\leq \sigma\leq 2m$.
\end{lemma}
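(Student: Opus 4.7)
The plan is to combine Lemma 5.4 with Lemma 5.5 to control each summand and then bound the number of summands using Lemma 5.3.

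First I would invoke Lemma 5.5 to fix the sequence $\{T_{j}\}$ tending to infinity with the property that every zero $\rho=\beta+i\gamma$ of $g_{n}(s)$ satisfies $|\gamma-T_{j}|^{-1}=O(\log T_{j})$. In particular, for the zeros appearing in the sum $\sum_{|T_{j}-\gamma|<1}(s-\rho)^{-1}$ at height $t=T_{j}$, the denominator satisfies $|s-\rho|\ge|T_{j}-\gamma|\gg 1/\log T_{j}$, so each individual term contributes $O(\log T_{j})$.

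Next I would count how many such zeros there are. By Lemma 5.3 the number of zeros of $g_{n}(s)$ with $|T_{j}-\gamma|<1$ in the strip $-2m+1<\sigma<2m$ is $N_{g_{n}}(T_{j}+1)-N_{g_{n}}(T_{j}-1)=O(\log T_{j})$. Multiplying the per-term bound by the number of terms yields
\[
\sum_{|T_{j}-\gamma|<1}\frac{1}{s-\rho}=O(\log T_{j})\cdot O(\log T_{j})=O(\log^{2} T_{j}),
\]
uniformly for $-2m+1\le\sigma\le 2m$.

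Finally, applying Lemma 5.4 at $t=T_{j}$ gives
\[
\frac{g_{n}'}{g_{n}}(\sigma+iT_{j})=\sum_{|T_{j}-\gamma|<1}\frac{1}{s-\rho}+O(\log T_{j})=O(\log^{2} T_{j}),
\]
uniformly in the stated strip, completing the proof.

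The only subtle point is to make sure the separation bound from Lemma 5.5 is applied with the correct absolute value: one needs $|s-\rho|\ge|\Im(s-\rho)|=|T_{j}-\gamma|$, which is automatic since $s=\sigma+iT_{j}$. Beyond that the argument is a straightforward "sum $\le$ (number of terms) $\times$ (size of each term)" estimate, so I do not anticipate any real obstacle.
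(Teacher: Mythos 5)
Your argument is correct and is essentially identical to the paper's own proof: take the sequence from the zero-separation lemma, bound each term of the partial-fraction sum by $O(\log T_{j})$, count the $O(\log T_{j})$ terms via Lemma 5.3, and absorb the $O(\log T_{j})$ error from the partial-fraction expansion. The only slip is that your lemma references are shifted by one relative to the paper's numbering: the separation sequence is Lemma 5.6 and the partial-fraction expansion is Lemma 5.5 (Lemma 5.4 is Titchmarsh's Lemma $\alpha$).
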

\begin{proof}
Let $\{T_{j}\}$ be as in Lemma 5.6. If $s=\sigma+iT_{j}$ then $|s-\rho|^{-1}\leq|T_{j}-\gamma|^{-1}= O(\log T_{j})$. Since the number of zeros with $|\gamma-T_{j}|<1$ is $O(\log T_{j})$  so Lemma 5.5 implies
\begin{align*}
\frac{g_{n}'}{g_{n}}(\sigma+iT_{j})&=O(\log^{2}T_{j})+O(\log T_{j})\\
&=O(\log^{2} T_{j}).
\end{align*}
\end{proof}
\begin{section}
{PROOF OF THEOREM 1.1}
\end{section}
It is sufficient to prove that $Z^{(n+1)}/Z^{(n)}(t)$ is decreasing  for large $t$. Write $T_{j}$ as in Lemma 5.7, for $k\geq m$ let $R_{k}$ be the rectangle with corners at $1-2k\pm iT_{j}$, $2k\pm iT_{j}$. Let $G_{n}(w)=h(w)g_{n}(w)$ and $s=\sigma+it$, where $s$ is in $R_{k}$ and $t$ is not the ordinate of some zero or pole of $G_{n}(s)$.  We define $I$ by
 \begin{equation}
 I=\frac{1}{2\pi i}\int_{\partial R_{k}}\frac{G'_{n}}{G_{n}}(w)\frac{s}{w(s-w)}dw.\label{eq:i1}
 \end{equation}
From (\ref{eq:funeq}) we have
 $$\chi(s)h_{n}(1-s)g_{n}(1-s)=(-1)^{n}h_{n}(s)g_{n}(s),$$
 hence we get
 \begin{equation}
 \frac{h'_{n}}{h_{n}}(s)+\frac{g'_{n}}{g_{n}}(s)=-\frac{h'_{n}}{h_{n}}(1-s)-\frac{g'_{n}}{g_{n}}(1-s)+\omega(s).\label{eq:10}
 \end{equation}
 Write $h_{n}(s)$ as
 \begin{equation}
 h_{n}(s)=\sum_{k=0}^{n}b_{n,k}(s)\omega^{k}(s)=(-1)^{n}\frac{\omega^{n}(s)}{2^{n}}+\sum_{k=0}^{n-1}b_{n,k}(s)\omega^{k}(s),\label{eq:h4}
 \end{equation}
 where $b_{n,k}(s)=c_{n,0,k}(s)$. Differentiating it we have
  \begin{equation}
  h'_{n}(s)=\frac{(-1)^{n}}{2^{n}}n\omega'(s)\omega^{n-1}(s)+\sum_{k=0}^{n-1}b_{n,k}(s)k\omega^{k-1}(s)\omega'(s)+\sum_{k=0}^{n-1}b_{n,k}'(s)\omega^{k}(s).\label{eq:h2}
  \end{equation}
  Since $b_{n,k}(s)$ is a polynomial in the variables $\omega'(s),\omega''(s),\cdots , \omega^{(n)}(s)$ with constant coefficients, $b_{n,k}'(s)$ is also a polynomial in the variables $\omega'(s),\omega''(s),\cdots , \omega^{(n+1)}(s)$ with constant coefficients. Therefore we have 
  \begin{equation}
  b_{n,k}(s)=O(1),\label{eq:x2}
  \end{equation}
  and $b_{n,k}'(s)=O(1)$ in $D$, hence with (\ref{eq:ome4}), (\ref{eq:ome5}), (\ref{eq:h2}) it follows that $h_{n}'(s)=O((\log|s|)^{n-1})$ in $D$. With (\ref{eq:h1}) if $|s|$ is large we obtain
  \begin{equation}
  \frac{h'_{n}}{h_{n}}(s)=O(1) \label{eq:h3} 
  \end{equation}
  in $D$. From Lemma 5.2 and Lemma 5.7 we have
  \begin{equation}
  \frac{g_{n}'}{g_{n}}(\sigma+iT_{j})=O(\log^{2} T_{j})\label{eq:11}
  \end{equation}
  for $1-2m\leq \sigma\leq 2k$. Hence with (\ref{eq:ome4}), (\ref{eq:10}), (\ref{eq:h3}), we have
  \begin{equation}
  \frac{g'_{n}}{g_{n}}(\sigma+iT_{j})=O(\log^{2} (k+T_{j}))\label{eq:g3}
  \end{equation}
  for $1-2k\leq \sigma\leq 2k$. From the definition of $G_{n}(s)$ we have 
   \begin{equation}
   \frac{G'_{n}}{G_{n}}(s)=\frac{h'}{h}(s)+\frac{g'_{n}}{g_{n}}(s),\label{eq:12}
   \end{equation}
   hence with (\ref{eq:gam1}) and (\ref{eq:g3}) we have
   \begin{equation}
   \frac{G'_{n}}{G_{n}}(\sigma+iT_{j})=O(\log^{2}(k+T_{j}))\label{eq:13}
   \end{equation}
   for $1/4< \sigma\leq 2k$. 
      From (\ref{eq:10}) and (\ref{eq:12}) we have
   \begin{align}
   \frac{G'_{n}}{G_{n}}(1-s)&=-\frac{h'_{n}}{h_{n}}(s)-\frac{G'_{n}}{G_{n}}(s)+\frac{h'}{h}(s)-\frac{h'_{n}}{h_{n}}(1-s)+\frac{h'}{h}(1-s)+\omega(s)\notag \\
   &=-\frac{h'_{n}}{h_{n}}(s)-\frac{G'_{n}}{G_{n}}(s)-\frac{h'_{n}}{h_{n}}(1-s),
   \label{eq:x11}
   \end{align}
   hence with (\ref{eq:h3}), (\ref{eq:13}) we have
    \begin{equation}
   \frac{G'_{n}}{G_{n}}(\sigma+iT_{j})=O(\log^{2}(k+T_{j}))\label{eq:x12}
   \end{equation}
   for $1-2k\leq \sigma\leq 2k$.
   If $w$ is on the horizontal sides of $\partial R_{k}$ and $T_{j}$ is sufficiently larger than $t$, then $|w(s-w)|>T^{2}_{j}/2$.
   Hence with (\ref{eq:x12}) we have 
   \begin{equation}
   \frac{1}{2\pi i}\int\frac{G'_{n}}{G_{n}}(w)\frac{s}{w(s-w)}dw=O\left(\frac{k\log^{2}(k+T_{j})}{T_{j}^{2}}\right), \label{eq:14}
   \end{equation}
   where the path of integration is the horizontal sides of $\partial R_{k}$ and the implied constant depends on $s$. Let us consider the vertical sides of $I$. We can write the integrals on the vertical sides as
   \begin{equation}
   \frac{1}{2\pi i}\int_{2k-iT_{j}}^{2k+iT_{j}}\frac{G_{n}'}{G_{n}}(w)\frac{s}{w(s-w)}dw-\frac{1}{2\pi i}\int_{2k-iT_{j}}^{2k+iT_{j}}\frac{G_{n}'}{G_{n}}(1-w)\frac{s}{(1-w)(s-1+w)}dw.\label{eq:15}
   \end{equation}
   From (\ref{eq:gam1}), (\ref{eq:g1}), (\ref{eq:12}),  we have
   \begin{equation}
   \frac{G'_{n}}{G_{n}}(2k+iy)=O(\log(k+|y|)).\label{eq:16}
   \end{equation}
   Hence with (\ref{eq:h3}), (\ref{eq:x11}) we have
   \begin{equation}
   \frac{G'_{n}}{G_{n}}(1-2k+iy)=O(\log(k+|y|)).\label{eq:17}
   \end{equation}
   From (\ref{eq:16}), (\ref{eq:17}) if we write $w=2k+iy$ the integral in (\ref{eq:15}) are 
   $$O\left(\frac{\log(k+|y|)}{k^{2}+y^{2}}\right),$$
   where the implied constant depends on $s$.
   Since
   \begin{align*}
   \int_{-\infty}^{\infty}\frac{\log(k+|y|)}{k^{2}+y^{2}}dy&=\frac{2}{k}\int_{0}^{\infty}\frac{\log(k+kz)}{1+z^{2}}dz\\
   &=O(k^{-1}\log k),
   \end{align*}
    we can see that (\ref{eq:15}) is $O(k^{-1}\log k)$ and with (\ref{eq:14}) we have
   \begin{align}
   I=O(kT_{j}^{-2}\log^{2}(k+T_{j}))+O(k^{-1}\log k),\label{eq:i2}
   \end{align}
   where the implied constant depends on $s$. One can evaluate $I$ by the residue theorem. By Lemma 4.2 and Lemma 4.4 $w=0$ is a simple pole of $G_{n}(w)$, hence we have
   \begin{equation}
   I=-\frac{G_{n}'}{G_{n}}(s)+r_{0}+\sum_{r}\frac{s}{a_{r}(s-a_{r})}-\sum_{r}\frac{s}{b_{r}(s-b_{r})},\label{eq:i3}
   \end{equation}
   where $a_{r}$ runs through the  zeros of $G_{n}(w)$ in $R_{k}$, $b_{r}$ runs through the poles of $G_{n}(w)$ in $R_{k}$ except at $w=0$ and $r_{0}$ is the residue of the integral in (\ref{eq:i1}) at $w=0$. If we expand $s/w(s-w)$ in a Laurent series of powers of $w$ then the constant term is $1/s$.  Hence with (\ref{eq:i2}) as $j\rightarrow\infty$ and $k\rightarrow\infty$ in (\ref{eq:i3}) we have
   \begin{equation}
   \frac{G_{n}'}{G_{n}}(s)=-\frac{1}{s}+A+\sum_{r_{1}}\frac{s}{a_{r_{1}}(s-a_{r_{1}})}-\sum_{r_{1}}\frac{s}{b_{r_{1}}(s-b_{r_{1}})},\label{eq:x9}
   \end{equation}
   where $A$ is a constant, $a_{r_{1}}$ runs through all zeros of $G_{n}(w)$ and $b_{r_{1}}$ runs through the poles of $G_{n}(w)$ except at $w=0$. From Lemma 4.6 and Lemma 5.3 these sums are locally uniformly absolutely convergent since $\sum_{n=1}^{\infty}\log n/n^{2}$ is convergent. Let $s=1/2+it$ in (\ref{eq:x9}) and assume the Riemann hypothesis hereafter. From Proposition 4.8 if we differentiate (\ref{eq:x9}) with respect to $t$ then we have
  \begin{equation}
  i\frac{d}{dt}\frac{G_{n}'}{G_{n}}\left(\frac{1}{2}+it\right)=O(|t|^{-2})-\sum_{\gamma}\frac{1}{(t-\gamma)^{2}}+\sum_{r_{2}}\frac{1}{(\frac{1}{2}+it-a_{r_{2}})^{2}}-\sum_{r_{2}}\frac{1}{(\frac{1}{2}+it-b_{r_{2}})^{2}},\label{eq:x10}
  \end{equation}
  where $\gamma$ runs through the zeros of $G_{n}(w)$ on the critical line, $a_{r_{2}}$ runs through those in the region $D_{1}-\{w|1-2m<\Re w<2m\}$ and $b_{r_{2}}$ runs through the poles of $G_{n}(w)$ in the same region. From Lemma 4.6 we have 
  $$\sum_{r}\frac{1}{(\frac{1}{2}+it-a_{r})^{2}}\ll\sum_{k=m}^{\infty}\frac{n}{(t+2k+1)^{2}}\ll\int_{2m+1}^{\infty}\frac{dx}{(t+x)^{2}}\ll\frac{1}{|t|},$$
   similarly we have
   $$\sum_{r}\frac{1}{(\frac{1}{2}+it-b_{r})^{2}}=O(|t|^{-1}),$$
   hence with (\ref{eq:x10}) we have
   \begin{equation}
   i\frac{d}{dt}\frac{G_{n}'}{G_{n}}\left(\frac{1}{2}+it\right)=O(|t|^{-1})-\sum_{\gamma}\frac{1}{(t-\gamma)^{2}}.\label{eq:x5}
   \end{equation}
   Let 
   $$F(t)=\left|h\left(\frac{1}{2}+it\right)\right|.$$
   From Proposition 2.1 we have
    $$Z^{(n)}(t)h_{n}^{-1}\left(\frac{1}{2}+it\right)h\left(\frac{1}{2}+it\right)=i^{n}G_{n}\left(\frac{1}{2}+it\right)e^{i\theta(t)},$$
    hence with the definition of $F(t)$ we get
    \begin{equation}
    \frac{Z^{(n+1)}(t)}{Z^{(n)}(t)}=i\frac{G_{n}'}{G_{n}}\left(\frac{1}{2}+it\right)+i\frac{h_{n}'}{h_{n}}\left(\frac{1}{2}+it\right)-\frac{F'(t)}{F(t)}.\label{eq:x6}
    \end{equation}
    From (\ref{eq:ome3}), (\ref{eq:gam3}) and Lemma 3.1 we have
    \begin{equation}
    \omega^{(j)}(s)=O(|t|^{-j}),\label{eq:x1}
    \end{equation}
    for $D\cap \{s|-2m+1<\sigma<2m\}$ where $j$ is a positive integer. Since $b_{n,k}^{(j)}(s)$ is a polynomial in the variables $\omega'(s),\omega''(s),\cdots , \omega^{(n+j)}(s)$ with constant coefficients whose constant term vanishes, we have
    \begin{equation}
    b_{n,k}^{(j)}(s)=O(|t|^{-1}) \label{eq:x3}
    \end{equation}
     in the same region. From (\ref{eq:h1}) we have
     \begin{equation*}
     h_{n}(s)=\frac{(\log |t|)^{n}}{2^{n}}+O((\log |t|)^{n-1}),
     \end{equation*}
     and from (\ref{eq:ome4}), (\ref{eq:h4}), (\ref{eq:x2}), (\ref{eq:x1}), (\ref{eq:x3}) we have
     \begin{equation*}
     h_{n}^{(j)}(s)=O(|t|^{-1}(\log |t|)^{n-1})
     \end{equation*}
     in the same region. It follows that if $t$ is large then 
     \begin{equation}
     \frac{d}{ds}\left(\frac{h_{n}'(s)}{h_{n}(s)}\right)=\frac{h_{n}''(s)}{h_{n}(s)}-\frac{(h_{n}'(s))^{2}}{h_{n}(s)^{2}}=O\left((|t|\log|t|)^{-1}\right)\label{eq:x4}
     \end{equation}
      for $D\cap \{s|-2m+1<\sigma<2m\}$.
      It is known (see Edwards [3, p.177]) that
      \begin{equation}
      \frac{d}{dt}\frac{F'(t)}{F(t)}=O(|t|^{-2}).\label{eq:x7}
      \end{equation}
      If $0<\gamma<t$ then $0<t-\gamma<t$ so $(t-\gamma)^{2}<t^{2}$. Hence if $t$ is large then by (\ref{eq:x5}), (\ref{eq:x6}), (\ref{eq:x4}), (\ref{eq:x7}) we have
       \begin{align}
   \frac{d}{dt}\frac{Z^{(n+1)}(t)}{Z^{(n)}(t)}&=-\sum_{\gamma}\frac{1}{(t-\gamma)^{2}}+O(t^{-1})+O(t^{-1}(\log t)^{-1})+O(t^{-2})\notag \\
   &<-\sum_{0<\gamma<t}\frac{1}{(t-\gamma)^{2}}+At^{-1}\notag\\
   &<-t^{-2}N'_{g_{n}}(t)+At^{-1}\notag\\
   &=t^{-1}(A-t^{-1}N'_{g_{n}}(t)),\label{eq:last}
   \end{align}
   where $N'_{g_{n}}(T)$ is the number of zeros of $g_{n}(1/2+it)$ with $0<t<T$ and $A$ is a positive constant. From Proposition 4.7 and Proposition 4.8, (\ref{eq:last}) is negative for large $t$. This completes the proof.
 
 Address: {Graduate School of Mathematics, Nagoya University, Furocho, Chikusaku, Nagoya 464-8602, Japan}\\ \\
 Email address: {m10041v@math.nagoya-u.ac.jp}
 \end{document}